\def\N{\mathbb N}
\def\Z{\mathbb Z}
\def\R{\mathbb R}
\def\A{\mathcal A}
\def\pfz{\begin{proof}}
\def\pfk{\end{proof}}
\def\Zmb{{\mathbb Z}_{-\beta}}
\def\dl{d_{-\beta}(\ell)}
\newcommand{\ZB}{\mathbb Z_{\beta}}
\newcommand{\ZBP}{\mathbb Z_{\beta}^+}
\newcommand{\ZMB}{\mathbb Z_{-\beta}}
\newcommand{\TB}{T_{\beta}}
\newcommand{\TMB}{T_{-\beta}}
\newcommand{\DMBL}{d_{-\beta}(\ell)}
\newcommand{\DMBRS}{d_{-\beta}^*(\ell+1)}
\newcommand{\DBU}{d_\beta(1)}
\newcommand{\DBUS}{d_\beta^*(1)}
\newcommand{\rozvojp}[1]{\langle #1\rangle_{\beta}}
\newcommand{\rozvojm}[1]{\langle #1\rangle_{-\beta}}
\newcommand{\lprec}{\prec_{\mathrm{lex}}}
\newcommand{\lpreceq}{\preceq_{\mathrm{lex}}}
\newcommand{\aprec}{\prec_{\mathrm{alt}}}
\newcommand{\apreceq}{\preceq_{\mathrm{alt}}}
\newtheorem{lem}{Lemma}
\newtheorem{thm}[lem]{Theorem}
\newtheorem{prop}[lem]{Proposition}
\newtheorem{de}[lem]{Definition}
\newtheorem{ex}[lem]{Example}
\newtheorem{pozn}[lem]{Remark}
\begin{document}


\title{Confluent Parry numbers, their spectra, and integers in positive- and negative-base number systems}

\author{D. Dombek$^1$, Z. Mas\'akov\'a$^2$, T. V\'avra$^2$\\[2mm]
\small $^1$ Department of Applied Mathematics FIT, Czech Technical University in Prague\\
\small Th\'akurova 9, 160 00 Praha 6, Czech Republic\\[2mm]
$^2$ \small Department of Mathematics FNSPE, Czech Technical University in Prague\\
\small Trojanova 13, 120 00 Praha 2, Czech Republic}

\date{}

\maketitle

\begin{abstract}
In this paper we study the expansions of real numbers in positive
and negative real base as introduced by R\'enyi, and Ito \& Sadahiro, respectively. 
In particular, we compare the sets $\ZBP$ and $\ZMB$ of nonnegative $\beta$-integers and $(-\beta)$-integers. 
We describe all bases $(\pm\beta)$ for which $\ZBP$ and $\ZMB$ can be coded by
infinite words which are fixed points of conjugated morphisms, and consequently have the same language. 
Moreover, we prove that this happens precisely for $\beta$ with another interesting property, namely that 
any integer linear combination of non-negative powers of the base $-\beta$
with coefficients in $\{0,1,\dots,\lfloor\beta\rfloor\}$ is a
$(-\beta)$-integer, although the corresponding sequence of digits is forbidden as a $(-\beta)$-integer.
\end{abstract}

\section{Introduction}\label{sec_1intro}


In positive base number systems (defined by R\'enyi~\cite{Ren57}) many properties are specific for
the class of confluent Parry numbers, sometimes called generalized
multinacci numbers, i.e. zeros $>1$ of polynomials
\begin{equation}\label{eq:confluentpos}
x^d-mx^{d-1}-mx^{d-2}-\cdots-mx-n\,, \quad \text{where }\ d\geq 1,\ m\geq n\geq 1\,.
\end{equation}
Note that all such numbers are Pisot numbers, i.e.\ algebraic integers $>1$ with conjugates inside the unit circle.
Among exceptional properties of such bases is the existence of optimal
representations~\cite{DDKL}, or the fact that the infinite word
$u_\beta$ coding the set $\Z_\beta$ of $\beta$-integers is reversal closed and the
corresponding Rauzy fractal is centrally symmetric~\cite{BernatConfluent}.
Linear numeration systems for confluent Parry numbers are also mentioned in~\cite{Edson}, in connection 
to calculating the Garsia entropy.

From our point of view, the most important aspect is that any
integer linear combination of non-negative powers of the base
with coefficients in $\{0,1,\dots,\lceil\beta\rceil-1\}$ is a
$\beta$-integer, although the sequence of coefficients may be
forbidden in the corresponding R\'enyi number system, as shown
in 1992 by Frougny~\cite{Fr92}. In other words, normalization of
any representation using only non-negative powers of the base $\beta$ and the alphabet
$\{0,1,\dots,\lceil\beta\rceil-1\}$ into the greedy $\beta$-expansion does not
produce a fractional part. Formally, defining
\begin{equation}\label{eq:spectrum}
X(\beta) = \Big\{\sum_{j=0}^N a_j \beta^j : N\in\N,\, a_j\in\Z,\, 0\leq a_j<\beta\Big\}\,,
\end{equation}
one can write for confluent Parry numbers that $X(\beta)=\Z_\beta^+$.
Recall that the study of sets $X(\beta)$ for $\beta\in(1,2)$ was initiated by
Erd\H{o}s et al.~\cite{Erdos}. Later, the problem was generalized to considering coefficients $a_j$ in a more general alphabet $\{0,1,\dots,r\}$. The corresponding set is called the spectrum of $\beta$.

Our aim is to obtain a result analogous to that of~\cite{Fr92} in
case of negative base systems considered by Ito and
Sadahiro~\cite{IS09}. The question can be stated as follows: For
which $\beta>1$, any linear combinations of non-negative powers of
$-\beta$ with coefficients in $\{0,1,\dots,\lfloor\beta\rfloor\}$
is a $(-\beta)$-integer? Such a question was already put forth
in~\cite{MV13}, where it is shown that among quadratic numbers,
only zeros of $x^2-mx-m$, $m\geq 1$, have the required property.
Moreover, these are exactly the bases for which the distances between consecutive points in $\Z_\beta$
and $\Zmb$ take the same values $\leq 1$. Moreover, the
infinite words $u_{\beta}$,  $u_{-\beta}$ coding the ordering of distances in
$\Z_\beta$, resp.\ $\Z_{-\beta}$ have the same language, which follows from the fact that they
are fixed points of conjugated morphisms.

In this paper we solve the question in general. Defining
\begin{equation}\label{eq:spectrumnegative}
X(-\beta) = \Big\{\sum_{j=0}^N a_j (-\beta)^j : N\in\N,\, a_j\in\Z,\, 0\leq a_j\leq\beta\Big\}\,,
\end{equation}
we show that $X(-\beta)=\Z_{-\beta}$ if and
only if $\beta$ is a zero of~\eqref{eq:confluentpos} with the additional requirement that $m=n$ if $d$ is even.
In view of the result in the quadratic case, there is another natural question to ask:
For which $\beta>1$, the sets $\ZBP$ and $\ZMB$ are in some sense similar?
The similarity can be expressed by comparing the values of distances between consecutive points in $\ZBP$ and in $\Zmb$. Moreover,
one can study their ordering in the real line coded by infinite words $u_\beta$ and $u_{-\beta}$.
It turns out that the phenomena satisfied equivalently in the quadratic case are not equivalent for bases of higher degree.
However, the property $X(-\beta)=\Z_{-\beta}$ characterizes the bases, for which the infinite words $u_\beta$ and $u_{-\beta}$ are fixed by conjugated morphisms.
In particular, we prove the following theorem.

\begin{thm}\label{thm_hlavni_superteorem}
Let $\beta>1$.  Denote by $\varphi$ the canonical morphism of $\beta$ (over a finite or an infinite alphabet) and $\psi$ the antimorphism fixing the infinite word coding $\ZMB$.
Then the following conditions are equivalent:
\begin{enumerate}

\item $\beta$ is a zero of  $x^d-mx^{d-1}-\cdots -mx-n$ with $m\geq n\geq 1$, such that $n=m$
for $d$ even.

\item $\varphi^2$ is conjugated with $\psi^2$.

\item $\ZMB=X(-\beta)$.
\end{enumerate}
\end{thm}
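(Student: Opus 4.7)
The plan is to prove the theorem via the cyclic chain $(1) \Rightarrow (2) \Rightarrow (3) \Rightarrow (1)$, with the explicit form of the R\'enyi expansion $d_\beta(1)$ for confluent Parry numbers serving as the backbone: since $\beta^d = m\beta^{d-1} + \cdots + m\beta + n$ one reads off $d_\beta(1) = m^{d-1}n$ when $n<m$, and an analogous admissible form when $n=m$. A preliminary step is to derive the companion formula for the Ito--Sadahiro expansion $d_{-\beta}(\ell)$ from the same minimal polynomial; this splits into cases according to the parity of $d$ and whether $n=m$ or $n<m$, and this is where the parity restriction in (1) will eventually show up.

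For $(1) \Rightarrow (2)$, with $d_\beta(1)$ and $d_{-\beta}(\ell)$ in hand I would write down the standard canonical morphism $\varphi$ on the alphabet $\{0, 1, \dots, d-1\}$ fixing $u_\beta$ (known from earlier work on confluent Parry numbers) and the analogously defined antimorphism $\psi$ fixing $u_{-\beta}$, then exhibit a concrete conjugating word $w$ satisfying $w\,\varphi^2(a) = \psi^2(a)\,w$ for every letter $a$. The verification is a letter-by-letter computation driven by the confluent minimal relation. For $(2) \Rightarrow (3)$, conjugation of $\varphi^2$ and $\psi^2$ forces $u_\beta$ and $u_{-\beta}$ to have the same language of factors; combined with Frougny's positive-base result $X(\beta) = \ZBP$, one transports representations from base $\beta$ to base $-\beta$ to conclude $X(-\beta) = \ZMB$. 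A point to be careful about is that the alphabet in $X(\beta)$ is $\{0,\dots,\lceil\beta\rceil-1\}$ while the alphabet in $X(-\beta)$ is $\{0,\dots,\lfloor\beta\rfloor\}$, but these agree whenever $\beta\notin\Z$, which is the nontrivial case.

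The main obstacle will be $(3) \Rightarrow (1)$. Starting from $X(-\beta) = \ZMB$, I would first show that $\beta$ is necessarily a Parry number by constructing, in the absence of eventual periodicity of $d_\beta(1)$, explicit finite sums $\sum_{j=0}^{N} a_j(-\beta)^j$ with $a_j\in\{0,\dots,\lfloor\beta\rfloor\}$ whose normalization into the greedy $(-\beta)$-expansion must produce a nontrivial fractional part. Once the Parry property is established, the admissibility conditions for $(-\beta)$-expansions in the alternating order $\aprec$ impose strong constraints on the shape of $d_{-\beta}(\ell)$: the nonterminal digits must all equal $\lfloor\beta\rfloor=m$, and matching these constraints against $d_\beta(1)$ forces $d_\beta(1) = m^{d-1}n$ together with the parity restriction. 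The delicate subcase is $d$ even with $n<m$: here one must produce a specific element of $X(-\beta)$ that fails to be a $(-\beta)$-integer, and extracting this witness explicitly from the structure of $d_{-\beta}(\ell)$ (via a carefully chosen polynomial in $-\beta$) is the technical heart of the argument.
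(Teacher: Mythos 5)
Your overall scheme is a cyclic chain $(1)\Rightarrow(2)\Rightarrow(3)\Rightarrow(1)$, whereas the paper proves $(1)\Rightarrow(2)$, $(1)\Rightarrow(3)$ and then the two converses $(2)\Rightarrow(1)$, $(3)\Rightarrow(1)$ separately. The cyclic scheme would be fine logically, but your middle link $(2)\Rightarrow(3)$ is where the proposal breaks down. First, you invoke Frougny's result $X(\beta)=\ZBP$, which holds precisely when $\beta$ is a confluent Parry number, i.e.\ it presupposes condition (1); under hypothesis (2) alone you have not yet shown this, so the argument is circular unless you first prove $(2)\Rightarrow(1)$ --- and that step (the content of Section 4 of the paper: conjugacy forces, via the combinatorial fact that $wx=yw$ makes $w$ a prefix of $y^\omega$, that $d_\beta^*(1)$ begins with $mm$, that $\beta$ is a simple Parry number, and finally that $d_\beta^*(1)=[m\cdots m(d_k-1)]^\omega$) is entirely absent from your plan. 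Second, even granting (1), the proposed mechanism ``same language of $u_\beta$ and $u_{-\beta}$, hence transport representations from base $\beta$ to base $-\beta$'' is not a proof: equality of the factor languages of the gap-coding words says nothing digit-wise about sums $\sum a_j(-\beta)^j$ with $a_j\in\{0,\dots,\lfloor\beta\rfloor\}$, whose powers alternate in sign. The paper's actual proof of $(1)\Rightarrow(3)$ works directly with the spectrum: it writes $X(-\beta)=\bigcup_n A_n$ with $A_{n+1}=(-\beta)A_n+\{0,\dots,m\}$, shows inductively that the gaps are exactly the $\Delta_i$ and that the gap sequence is fixed by the antimorphism $\psi$, and only then identifies $X(-\beta)$ with $\ZMB$. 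Some argument of this kind (or a proof of $(2)\Rightarrow(1)$ plus $(1)\Rightarrow(3)$) is needed; your sketch supplies neither.

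Your $(3)\Rightarrow(1)$ plan also misses the actual structure. The constraint coming from $X(-\beta)=\ZMB$ is not that ``all nonterminal digits of $d_{-\beta}(\ell)$ equal $m$''; the correct statement (the paper's Proposition~16) is that $d_{-\beta}(\ell)=(m0)^k ab\cdots$ with $ab\neq m0$, the alternating prefix $(m0)^k$ being forced because $\Delta'_1\leq 1$ gives $\min(1)=m$, and the competing shape $(m0)^{j}0^{2t-1}c\cdots$ is excluded by an explicit gap estimate $\Delta'_{2j}>1$, contradicting the fact (from the $A_n$ construction) that all gaps of $X(-\beta)$ are $\leq 1$. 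Likewise, your preliminary step ``show $\beta$ is a Parry number from aperiodicity of $d_\beta(1)$'' is both unmotivated (hypothesis (3) concerns the negative base, and no mechanism is given for turning aperiodicity of $d_\beta(1)$ into an element of $X(-\beta)\setminus\ZMB$) and unnecessary: the paper instead takes the single witness $z$ with representation $1(m0)^k m\bullet 0^\omega$, proves (Lemma~18, using purely periodic $(-\beta)$-expansions) that its expansion has at most $2k$ digits, and then, since $z\in X(-\beta)=\ZMB$, compares $\gamma$-values to pin down the minimal polynomial; the admissible outcomes are exactly the odd-degree confluent polynomials and the even-degree ones with $n=m$, the remaining candidate $x^{2k+1}-mx^{2k}-\cdots-(m-1)x-m$ being eliminated by the same gap estimate. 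This witness construction and the accompanying inequalities are the technical heart of $(3)\Rightarrow(1)$, and the proposal does not contain a workable substitute for them.
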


%
%
%
%

The paper is organized as follows. In Section~\ref{sec2_prelim} we recall the definition and properties of numeration systems with positive and negative base, as defined by R\'enyi and Ito \& Sadahiro, respectively. In particular, we focus on the set of $\beta$-  and $(-\beta)$-integers and their coding by infinite words invariant under morphisms. In Section~\ref{sec_confluent} we consider $\beta>1$, the root of~\eqref{eq:confluentpos} and reproduce the proof that $X(\beta)=\Z_\beta^+$. Then we demonstrate implications $(1)\Rightarrow(2)$ and $(1)\Rightarrow(3)$ of Theorem~\ref{thm_hlavni_superteorem}. Implications $(2)\Rightarrow(1)$ and $(3)\Rightarrow(1)$ are given separately in Sections~\ref{sec_dan} and~\ref{sec_tomzuzana}. Let us mention that the equivalence $(1)\Leftrightarrow(2)$ was announced in a conference contribution~\cite{Dom13}.
Finally, Section~\ref{sec_5examples} contains some remarks.


\section{Preliminaries}\label{sec2_prelim}

Given $\alpha\in\R$, $|\alpha|>1$, every $x\in\R$ can be expressed as series $x=\sum_{i=-\infty}^N a_i\alpha^i$, $a_i\in\Z.$ Such a series is called an $\alpha$-representation of $x$ and is usually denoted by
$x = a_Na_{N-1}\cdots a_1a_0\bullet a_{-1}a_{-2}\cdots,$ where $\bullet$ separates between the coefficients at negative and non-negative powers of the base. An $\alpha$-representation is not unique. A specific type of $\alpha$-representations are the $\beta$-expansions and $(-\beta)$-expansions, whose properties we summarize below.

We will also need the notion of finite and infinite words over a given alphabet $\A$. We denote by $\A^*$ the monoid of finite words equipped with the operation of concatenation and the empty word $\epsilon$. A morphism over $\A$ is a mapping $\varphi:\A^*\to\A^*$ which satisfies $\sigma(vw)=\sigma(v)\sigma(w)$ for every pair of finite words $v,w\in\A^*$. An antimorphism $\psi:\A^*\to\A^*$ satisfies $\sigma(vw)=\sigma(w)\sigma(v)$ for $v,w\in\A^*$. The action of morphisms and antimorphisms can be extended to infinite words, in particular,
if $u= \cdots u_{-2}u_{-1}|u_0u_1u_2\cdots \in \A^\Z$ is a pointed bidirectional infinite word, then 
$$
\begin{aligned}
\varphi(u) &= \cdots \varphi(u_{-2})\varphi(u_{-1})|\varphi(u_0)\varphi(u_1)\varphi(u_{2})\cdots\\
\psi(u) &= \cdots \psi(u_{2})\psi(u_{1})\psi(u_0)|\psi(u_{-1})\psi(u_{-2})\cdots\\
\end{aligned}
$$


\subsection{R\'enyi $\mathbf{\beta}$-expansions}\label{sec_22renyi}

In 1957, R\'enyi~\cite{Ren57} defined the positional numeration
system with a positive real base. For $\beta>1,$ any
$x\in[0,1)$ has an expansion of the form
$d_\beta(x)=x_1x_2x_3\cdots$ defined by
\[
x_i=\lfloor\beta\TB^{i-1}(x)\rfloor,\text{ where }\TB(x)=\beta x-\lfloor\beta x\rfloor\,.
\]
Any $x\in[0,1)$ is then represented by an infinite string of
non-negative integer digits $<\beta$, formally, by an element of
$\A^\N=\{0,1,\ldots,\lceil\beta\rceil-1\}^\N$. Not every infinite
word over  $\A^\N$ does play the role of $d_\beta(x)$ of some
$x\in[0,1)$. Those that do are called admissible (or
$\beta$-admissible) and their characterization is due to
Parry~\cite{Par60}. He proved that a digit string
$x_1x_2\cdots\in\A^\N$ is admissible if and only if it fulfills
the lexicographic condition
\begin{equation}\label{eq_admissibilita_renyi}
0^\omega\lpreceq x_ix_{i+1}x_{i+2}\cdots\lprec\DBUS:=\lim_{y\to1_-}d_\beta(y)\text{ for all }i\geq 1\,.
\end{equation}
Here, we write $u^\omega=uuu\cdots$ for infinite repetition of the word $u$ and $\lprec$ stands for standard lexicographic ordering.
The limit is taken over the product topology on $\A^\N$. Let us point
out that the lexicographic ordering on admissible strings
corresponds to the ordering on the unit interval $[0,1)$, i.e.
$x<y$ if and only if $d_\beta(x)\lprec d_\beta(y)$.

Recall that the so-called R\'enyi expansion of unity is defined as
\begin{equation}\label{eq:db1}
\DBU=d_1d_2d_3\cdots,\text{ where $d_1=\lfloor\beta\rfloor$ and
$d_2d_3\cdots=d_\beta(\beta-\lfloor\beta\rfloor)$}\,.
\end{equation}
If $\DBU$ is eventually periodic, then $\beta$ is called a Parry
number. If, moreover, $\DBU=d_1\cdots d_k0^\omega$, then it is called a simple Parry number.

For the infinite R\'enyi expansion of unity $\DBUS$ one has
\[
\DBUS=\begin{cases}
(d_1\cdots d_{k-1}(d_k-1))^\omega & \text{if $\DBU=d_1\cdots d_k0^\omega$ with $d_k\neq 0$,}\\
\DBU & \text{otherwise.}
\end{cases}
\]

The notion of $\beta$-expansions can be naturally extended from $[0,1)$ to all reals.

\begin{de}\label{de_betarozvoje}
Let $\beta>1$, $x\in\R^+$. Let $k\in\N$ be minimal such that
$\frac{x}{\beta^k}\in[0,1)$ and
$d_\beta\Big(\frac{x}{\beta^k}\Big)=x_1x_2x_3\cdots$. Then the
$\beta$-expansion of $x$ is defined as
\[
\rozvojp{x}=\begin{cases}
x_1\cdots x_{k-1}x_k\bullet x_{k+1}x_{k+2}\cdots & \text{ if $k\geq 1$,}\\
0\bullet x_1x_2x_3\cdots & \text{ if $k=0$.}
\end{cases}
\]
For negative real numbers $x$ we use the
notation $-\rozvojp{|x|}$.
\end{de}

When $\beta$ is an integer, then the set of numbers whose
expansion uses only non-negative powers of $\beta$ is precisely
equal to $\Z$. As a natural generalization of $\Z$, the set $\ZB$
of $\beta$-integers can be defined for every $\beta>1$ using the
notion of $\rozvojp{x}$.

\begin{de}\label{de_betacela}
Let $\beta>1$. We define $\ZB=\ZBP\cup(-\ZBP)$, where
\[
\ZBP=\{x\geq0\ :\ \rozvojp{x} = x_k\cdots x_1x_0\bullet 0^\omega\}\ =\ \bigcup_{i\geq 0}\beta^i\TB^{-i}(0)\,.
\]
\end{de}

The distances between consecutive elements of $\ZB$ take values
\begin{equation}\label{eq_mezery_thurston}
\Delta_k=\sum_{i\geq 1}\frac{d'_{i+k}}{\beta^i},\quad
k=0,1,2,\ldots\,,
\end{equation}
where $\DBUS=d'_1d'_2d'_3\cdots$, see~\cite{Thu89}. Since
$\Delta_0=\sum_{i\geq 1}\frac{d'_i}{\beta^i}=1$ and any suffix of
$\DBUS$ either fulfills~\eqref{eq_admissibilita_renyi} or is equal
to $\DBUS$ itself, we get $\Delta_k\leq 1$ for all $k$.

We can encode the ordering of distances in
$\ZBP=\{z_0=0<z_1<z_2<\cdots\}$ by an infinite word
$v_\beta=v_0v_1v_2\cdots$ over the infinite alphabet $\N$. We set
$v_j=k$ if $k$ is the greatest index, at which the
$\beta$-expansions $\rozvojp{z_j}$ and $\rozvojp{z_{j+1}}$ differ.
Note that one has $z_{j+1}-z_j=\Delta_k$.

It can be seen that $v_\beta$ is a fixed point of a morphism
$\varphi:\N^*\rightarrow\N^*$, defined by
\begin{equation}\label{eq:fabreplus_obecne}
\varphi(i)=0^{d'_{i+1}}(i+1)\ \text{ for all }\ i\in\N\,.
\end{equation}

If $\beta$ is a Parry number, it is obvious
from~\eqref{eq_mezery_thurston} that the distances between
consecutive elements of $\ZBP$ take only finitely many values.
Then both $v_\beta$ and $\varphi$ can be projected onto a finite
alphabet. In particular, if $\DBU=d_1\cdots d_k 0^\omega$,
$d_k\neq 0$, then
\[
\Delta_{j+k}=\Delta_{j}\text{ for all $j\geq 0$}
\]
and the infinite word $u_\beta$ on the restricted alphabet $\{0,\dots,k-1\}$
is a fixed point of the morphism
\begin{equation}\label{eq:substitucesimpleParry}
\begin{aligned}
\varphi(i)=&\ 0^{d_{i+1}}(i+1)\ \text{ for } i\leq k-2\,,\\
\varphi(k-1)=&\ 0^{d_k}\,.
\end{aligned}
\end{equation}
If $\DBU=\DBUS=d_1\cdots d_k (d_{k+1}\cdots d_{k+p})^\omega$ with $k,p$ minimal, then
\[
\Delta_{j+p}=\Delta_{j}\text{ for all $j\geq k$},
\]
and the infinite word $u_\beta$ on the restricted alphabet
$\{0,\dots,k+p-1\}$ is a fixed point of the morphism
\begin{equation}\label{eq:substitucenonsimpleParry}
\begin{aligned}
\varphi(i)=&\ 0^{d_{i+1}}(i+1)\ \text{ for } i\leq k+p-2\,,\\
\varphi(k+p-1)=&\ 0^{d_{k+p}}k\,.
\end{aligned}
\end{equation}
These, the so-called canonical morphisms, were given
in~\cite{Fab95}.


\subsection{Ito-Sadahiro $(-\beta)$-expansions}\label{sec_22is}

In 2009, Ito and Sadahiro~\cite{IS09} considered a numeration
system with a base $-\beta<-1$. Any $x\in[\ell,\ell+1),$ where
$\ell=\frac{-\beta}{\beta+1},$ has an
 expansion of the form $d_{-\beta}(x)=x_1x_2x_3\cdots\in\A^\N$
defined by
\[
x_i=\lfloor-\beta\TMB^{i-1}(x)-\ell\rfloor\in\A=\{0,1,\ldots,\lfloor\beta\rfloor\}^\N,\text{
where }\TMB(x)=-\beta x-\lfloor-\beta x-\ell\rfloor\,.
\]

Ito
and Sadahiro proved that a digit string $x_1x_2x_3\cdots\in\A^\N$
is $(-\beta)$-admissible, i.e.\ equal to $d_{-\beta}(x)$ for some $x\in[\ell,\ell+1)$  if and only if
 it fulfills
\begin{equation}\label{eq_admissibilita_is} \DMBL\apreceq
x_ix_{i+1}x_{i+2}\cdots\aprec\DMBRS:=\lim_{y\to
{l+1}_-}d_{-\beta}(y)\text{ for all }i\geq 1\,.
\end{equation}
Here, $\aprec$ stands for alternate lexicographic ordering defined
as follows:
\[
x_1x_2\cdots\aprec y_1y_2\cdots\ \Leftrightarrow\
(-1)^k(x_k-y_k)<0\text{ for $k$ smallest such that }x_k\neq
y_k\,.
\]
The alternate ordering corresponds to the ordering on reals, i.e.\
$x<y$ if and only if $d_{-\beta}(x)\aprec d_{-\beta}(y)$.
Note that $\aprec$ can also be
used to compare finite digit strings when suffix $0^\omega$
is added to them.

In~\cite{IS09} it is also shown that
\[
\DMBRS=\begin{cases}
(0l_1\cdots l_{q-1}(l_q-1))^\omega & \text{if $\DMBL=(l_1l_2\cdots l_q)^\omega$ for $q$ odd,} \\
0\DMBL & \text{otherwise.}
\end{cases}
\]

Now the expansion can be defined for all reals without the need of a minus sign.
\begin{de}\label{de_minusbetarozvoje}
Let $-\beta<-1$, $x\in\R$. Let $k\in\N$ be minimal such that
$\frac{x}{(-\beta)^k}\in(\ell,\ell+1)$ and
$d_{-\beta}\Big(\frac{x}{(-\beta)^k}\Big)=x_1x_2x_3\cdots$. Then
the $(-\beta)$-expansion of $x$ is defined as
\[
\rozvojm{x}=\begin{cases}
x_1\cdots x_{k-1}x_k\bullet x_{k+1}x_{k+2}\cdots & \text{ if $k\geq 1$,}\\
0\bullet x_1x_2x_3\cdots & \text{ if $k=0$.}
\end{cases}
\]
\end{de}

Similarly as in a positive base numeration, the set of
$(-\beta)$-integers $\ZMB$ can now be defined using the notion of
$\rozvojm{x}$. Note that $\ZMB$
coincides with $\Z$ if and only if  $\beta$ is an integer.

\begin{de}\label{de_minusbetacela}
Let $-\beta<-1$. Then the set of $(-\beta)$-integers is defined as
\[
\ZMB=\{x\in\R\ :\ \rozvojm{x} = x_k\cdots x_1x_0\bullet
0^\omega\}\ =\ \bigcup_{i\geq 0}(-\beta)^i\TMB^{-i}(0)\,.
\]
\end{de}

It can be shown that if $1<\beta<\tau=\frac12(1+\sqrt5)$, then the set of $(-\beta)$-integers is trivial, $\ZMB=\{0\}$.
It is therefore reasonable to consider $\beta\geq\tau$. In order to describe the distances between consecutive
$(-\beta)$-integers, we will recall some notation
from~\cite{ADMP12}. Let
\begin{align*}
\min(k)= & \min\{a_{k-1}\cdots a_1a_0\ :\ a_{k-1}\cdots a_1a_0
0^\omega\text{ is admissible}\}\,,
\end{align*}
where $\min$ is taken with respect to the alternate order on
finite strings. Similarly we define $\max(k)$. Furthermore, let
$\gamma$ be the ``value function'' mapping finite digit strings to
real numbers,
\[
\gamma:\quad x_{k-1}\cdots x_1 x_0 \quad\mapsto\quad
\gamma(x_{k-1}\cdots x_1 x_0)=\sum_{i=0}^{k-1}x_i(-\beta)^i\,.
\]
 It was shown in~\cite{ADMP12} that the distances between
consecutive elements $x<y$ of $\ZMB$ take values
$y-x\in\{\Delta'_k$\ :\ $k\in\N\},$
\begin{equation}\label{eq_vzdalenosti_zmb_predpis}
\Delta'_k=\Big|(-\beta)^k+\gamma\big(\min(k)\big)-\gamma\big(\max(k)\big)\Big|\,,
\end{equation}
where $k$ is the greatest index at which $\rozvojm{x}$ and
$\rozvojm{y}$ differ.

We can encode the ordering of distances in $\ZMB=\{\cdots <z_{-1}<z_0=0<z_1<\cdots\}$ by a biinfinite word
\[
v_{-\beta}=\cdots v_{-3}v_{-2}v_{-1}|v_0v_1v_2\cdots,\quad
v_i\in\{0,1,2,\ldots\}\,,
\]
where $v_j=k$ if $k$ is the greatest index, at which the $(-\beta)$-expansions of  $z_j$ and $z_{j+1}$ differ.
Note that $z_{j+1}-z_j=\Delta'_k$.

In~\cite{ADMP12} it is shown that
there exists an antimorphism $\psi:\N^*\rightarrow\N^*$ such that
$\psi^2$ is a non-erasing non-identical morphism and
$\psi(v_{-\beta})=v_{-\beta}$.
Moreover, $\psi$ is of the form
\begin{equation}\label{thm_antimorfismy}
\psi(k)=\left\{\begin{array}{ll}
S_{k}(k+1)\widetilde{R_{k}}&\quad\text{for $k$ even},\\[2mm]
R_{k}(k+1)\widetilde{S_{k}}&\quad\text{for $k$ odd}\,,
\end{array}\right.
\end{equation}
where $\widetilde{u}$ denotes the mirror image of the word $u$.
The word $S_k$ codes the distances between consecutive
$(-\beta)$-integers
$\{\gamma(\min(k)0),\ldots,\gamma(\min(k+1))\}$ (in given order)
and similarly $R_k$ in
$\{\gamma(\max(k)0),\ldots,\gamma(\max(k+1))\}$.

When $\DMBL$ is eventually periodic, then $\beta$ is called an Yrrap number (or Ito-Sadahiro number).
In this case, $v_{-\beta}$ and $\psi$ can
be projected to a finite alphabet as the distances of the same
length can be coded by the same letter, see~\cite{ADMP12,Ste12}.
The infinite word over the restricted alphabet is denoted $u_{-\beta}$.

\section{Confluent Parry numbers}\label{sec_confluent}

Let us study the properties of $\beta$- and $(-\beta)$-integers in case that $\beta>1$ is a confluent Parry number, i.e.\ a zero of $x^d-mx^{d-1}-mx^{d-2}-\cdots-mx-n$, where $d\geq 1,\ m\geq n\geq 1$. Their significance was first observed by Frougny in~\cite{Fr92} who shows that the corresponding linear numeration system is confluent. We present the formulation using the spectrum $X(\beta)$, for which we aim to find an analogue in case of negative base. For illustration of the differences between negative and positive base, we include its proof.

\begin{thm}\label{t:confluenceFrougny}
Let $\beta>1$. Then $X(\beta) 
= \Z_\beta^+$ if and only if $\beta$ is a zero of~\eqref{eq:confluentpos}.
\end{thm}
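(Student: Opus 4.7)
The inclusion $\Z_\beta^+\subseteq X(\beta)$ is immediate, since the greedy $\beta$-expansion of any $\beta$-integer uses digits in $\{0,1,\dots,\lceil\beta\rceil-1\}$. Hence the theorem reduces to the equivalence of the confluent polynomial condition with the reverse inclusion $X(\beta)\subseteq\Z_\beta^+$, i.e.\ with the statement that every integer digit sum $\sum_{j\geq 0}a_j\beta^j$ with $0\leq a_j\leq\lfloor\beta\rfloor$ has a terminating greedy $\beta$-expansion.

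For $(\Leftarrow)$, suppose $\beta$ is a root of $x^d-mx^{d-1}-\cdots-mx-n$. A short computation gives $d_\beta(1)=m^{d-1}n\,0^\omega$, so $\lfloor\beta\rfloor=m$ and the alphabet used in $X(\beta)$ is $\{0,1,\dots,m\}$. I plan to describe an explicit normalization on finite strings $a_Na_{N-1}\cdots a_0$ over this alphabet, based on the rewrite induced by the defining relation $\beta^{j+d}=m\beta^{j+d-1}+\cdots+m\beta^{j+1}+n\beta^j$: whenever a block $m^{d-1}n$ occurs at positions $j+d-1,\dots,j$, replace it by adding $1$ to $a_{j+d}$ and zeroing the block. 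A careful scheduling of the rewrites, processing the highest (leftmost) occurrence first, keeps the result in the alphabet $\{0,1,\dots,m\}$ up to a controlled carry; overflows of the form $(m+1,0,\dots,0)$ at the top can be re-expanded as $(m,m,\dots,m,n)$ and eliminated recursively. A ranking argument on the pair consisting of the highest position still involved and the number of excess digits shows termination, after which the string is admissible and lies in $\{0,1,\dots,m\}^*$, witnessing that the original sum belongs to $\Z_\beta^+$.

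For $(\Rightarrow)$, assume $X(\beta)=\Z_\beta^+$ and set $m=\lfloor\beta\rfloor$. I would first force $\beta$ to be a simple Parry number by testing the hypothesis against the family $\sigma_k=m\sum_{j=0}^k\beta^j\in X(\beta)$: if $d_\beta(1)$ were not of the form $t_1\cdots t_D0^\omega$, the greedy expansion of $\sigma_k$ for sufficiently large $k$ would develop a non-zero fractional part, contradicting $\sigma_k\in\Z_\beta^+$. Once $d_\beta(1)=t_1\cdots t_D0^\omega$ with $t_1=m$ is established, I would then show $t_1=\cdots=t_{D-1}=m$ and $1\leq t_D\leq m$ by constructing, for each possible deviation of the R\'enyi expansion of $1$ from this pattern, an explicit element of $X(\beta)$ of the form $\sum_{j<D}a_j\beta^j$ with $a_j\in\{0,\dots,m\}$ targeted just past a suitable power $\beta^k$, and verifying that its greedy expansion spills into negative powers of $\beta$.

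The main obstacle is the $(\Rightarrow)$ direction: translating the inclusion $X(\beta)\subseteq\Z_\beta^+$ into the rigid shape $t_1=\cdots=t_{D-1}=m$, $t_D\leq m$ of $d_\beta(1)$ requires tailoring witness elements of $X(\beta)$ to each possible failure mode and running the greedy algorithm on them by hand. The $(\Leftarrow)$ direction is Frougny's classical normalization argument; the converse is essentially its dual, turning the absence of the rewrite rule into an explicit non-$\beta$-integer element whenever the defining polynomial is not confluent.
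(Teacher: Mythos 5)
Your overall skeleton (a digit-rewriting argument for sufficiency, explicit witness elements of $X(\beta)$ for necessity) is the same as the paper's, but both halves have genuine gaps as written. In the direction $(\Leftarrow)$, your rewrite fires only on exact blocks $m^{d-1}n$ and pushes a carry to the position above. First, non-admissibility of a string over $\{0,\dots,m\}$ is caused by a factor $m^{d-1}z$ with $n< z\leq m$, and such a string need not contain the factor $m^{d-1}n$ at all: for $\beta$ the root of $x^2-2x-1$ (so $m=2$, $n=1$) the string $22$ has value $2\beta+2=\beta^2+1\in\Z_\beta^+$, but your rule never fires on it, so your terminal strings need not be admissible and the inclusion $X(\beta)\subset\Z_\beta^+$ does not follow. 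Second, the carry can create the digit $m+1$ in the middle of the word, not only ``at the top'', and your termination ranking is left unspecified. The paper sidesteps both issues by widening the rewrite window to include the digit on the left: it replaces a factor $y\,m^{d-1}z$ (with $0\leq y<m$ and $z>n$) by $(y+1)0^{d-1}(z-n)$. This never leaves the alphabet $\{0,\dots,m\}$, so there are no carries to manage, and it strictly decreases the digit sum, so termination and admissibility of the output are immediate.

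In the direction $(\Rightarrow)$, your first step --- that if all $\sigma_k=m\sum_{j=0}^k\beta^j$ are $\beta$-integers then $d_\beta(1)$ must be finite --- is asserted, not proved, and it is strictly stronger than what the theorem itself provides (the theorem only says \emph{some} element of $X(\beta)$ escapes $\Z_\beta^+$, not one of the $\sigma_k$); you would owe a separate argument here. The paper shows this detour is unnecessary: with $i\geq 2$ minimal such that $d_i<d_1$ in $d_\beta(1)=d_1d_2\cdots$, it uses the single witness $z=d_i+1+\sum_{j=1}^{i-1}d_1\beta^j$, for which $z-\beta^i\in(0,1)$ by the lexicographic condition; if $d_{i+1}d_{i+2}\cdots\neq 0^\omega$ and $\beta^i<z<\beta^{i+1}$, then $\langle z\rangle_\beta=10^i\bullet z_1z_2\cdots$ has a nonzero fractional part, settling both the non-simple-Parry and the simple-but-non-confluent cases at once. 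Moreover, your step (ii) witnesses ``targeted just past a suitable power'' face exactly the subtlety the paper must treat separately: the candidate may satisfy $z\geq\beta^{i+1}$, in which case the greedy expansion does not begin with $1$ at position $i$ and the ``spills into negative powers'' conclusion fails as stated. The paper shows this can only happen when $\lfloor\beta\rfloor=1$ and $d_\beta(1)=10^j1d_{j+3}\cdots$, and then uses the second witness $w=\beta^j+1$; your sketch does not address this case, so the necessity direction is incomplete without it.
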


\pfz
Obviously, $X(\beta) \supset \Z_\beta^+$ for every $\beta>1$.
Realize that $\beta>1$ satisfying~\eqref{eq:confluentpos} is equivalent to
\begin{equation}\label{eq:confldb}
d_\beta(1)=m^{d-1}n0^\omega\,.
\end{equation}
First, let us show that~\eqref{eq:confldb} implies $X(\beta)\subset\Z_\beta^+$.
Let $x=\sum_{i=0}^N a_i\beta^i\in X(\beta)$. If $\langle x\rangle_\beta=a_N\cdots a_1a_0\bullet 0^\omega$, then obviously
$x\in \Z_\beta^+$. In the opposite case, by the Parry condition~\eqref{eq_admissibilita_renyi}, we derive that $a_N\cdots a_1a_0$
contains a substring which is lexicographically greater than $m^{d-1}n$, consequently, the string $0a_N\cdots a_1a_0$ contains a substring
$ym^{d-1}z$, where $0\leq y<m$ and $n<z\leq m$.
Since $\beta$ is a zero of~\eqref{eq:confluentpos}, replacing
the substring $ym^{d-1}z$ in $0a_N\cdots a_1a_0$ by the substring $(y+1)0^{d-1}(z-n)$,
  we find a representation of the same number $x$ with strictly smaller digit sum. Consequently, after a finite number of such steps, we obtain a representation $b_Kb_{K-1}\cdots b_1b_0\bullet0^\omega$ of $x$ which does not contain any substring lexicographically greater than $m^{d-1}n$, which by~\eqref{eq_admissibilita_renyi} shows that $\langle x\rangle_\beta=b_Kb_{K-1}\cdots b_1b_0\bullet0^\omega$, i.e. $x\in\Z_\beta^+$.

It remains to show that~\eqref{eq:confldb} is necessary for
$X(\beta) \subset \Z_\beta^+$. Let
$d_\beta(1)=d_1d_2d_3\cdots$ and let $i\geq 2$ be minimal, such
that $d_i<d_1$, i.e. $d_\beta(1)=d_1d_1\cdots d_1 d_i
d_{i+1}\cdots$. Suppose that $d_{i+1}d_{i+2}\cdots \neq 0^\omega$.
We find an element of the spectrum $X(\beta)$ which is not a $\beta$-integer.
Consider $z= d_i+1+\sum_{j=1}^{i-1
}d_1\beta^j= \underbrace{d_1 d_1
\cdots d_1}_{\text{\tiny $i-1$ times}}(d_i+1)\bullet$.
By~\eqref{eq_admissibilita_renyi}, we have $d_{i+1}d_{i+2}\cdots
\lprec d_1d_2\cdots$ and the lexicographic ordering corresponds to
the natural order in $[0,1]$. Consequently,
$$
z-\beta^i=1-\sum_{k=1}^{+\infty} \frac{d_{i+k}}{\beta^k}\in(0,1)\,.
$$
If $\beta^i<z<\beta^{i+1}$, then the proof is finished, because the $\beta$-expansion of $z$ is of the form $\langle z\rangle_\beta=10^i\bullet z_1z_2z_3\cdots$, where $\langle z-\beta^i\rangle_\beta=0\bullet z_1z_2z_3\cdots\neq 0^\omega$. Therefore $z\in X(\beta)$ but $z\notin\ZBP$.

It remains to solve the case that $z\geq \beta^{i+1}$. From $\beta^{i+1}\leq z<\beta^i+1$, we derive that $d_1=\lfloor\beta\rfloor=1$ and $\beta^{k}(\beta-1)=T^{k}(\beta-1)<1$ for $0\leq k\leq i$. This, by definition~\eqref{eq:db1} of $d_\beta(1)$ implies
$d_{k+1}=\lfloor\beta T^{k-1}(\beta-1)\rfloor=0$ for $1\leq k\leq i$. Hence we have $d_\beta(1)=10^j1d_{j+3}d_{j+4}\cdots$, where $j\geq i\geq 2$. Consider $w=\beta^{j}+1=10^{j-1}1\bullet$, i.e.
$$
w-\beta^{j+1}=1-\frac1{\beta}-\sum_{k=2}^{+\infty} \frac{d_{j+1+k}}{\beta^k}\in(0,1)\,.
$$
Since  $d_{j+2}=\lfloor\beta^{j+1}(\beta-1)\rfloor=1$, it follows that
$$
1\leq \beta^{j+2}-\beta^{j+1}<\beta^{j+2}-\beta^{j} = \beta^{j+2}-w+1\,,
$$
and therefore $w<\beta^{j+2}$. Necessarily, the $\beta$-expansion of $w$ is of the form $\langle w\rangle_\beta  = 10^{j+1}\bullet w_1w_2w_3\cdots$,
where $\langle w-\beta^{j+1}\rangle_\beta=0\bullet w_1w_2w_3\cdots\neq 0^\omega$. Therefore $w\in X(\beta)$ but $w\notin\ZBP$.
\pfk

From the above theorem, one can see that for the description of the gap sequence in the spectrum $X(\beta)$, it is sufficient to use the knowledge about $\beta$-integers. Since $d_\beta(1) =m^{d-1} n$, from~\eqref{eq_mezery_thurston} and~\eqref{eq:substitucesimpleParry}, we derive that the gaps in $X(\beta)=\Z_\beta$ take values
$$
\Delta_i=\,m\Big(\frac{1}{\beta}+\cdots+\frac{1}{\beta^{d-1-i}}\Big)+\frac{n}{\beta^{d-i}}\quad\text{for $0\leq i\leq d-1$}\,.
$$
and the gap sequence in $X(\beta)=\Z_\beta^+$ is coded by the infinite word $u_\beta$ over the alphabet $\{0,\dots,d-1\}$, which is a fixed point
of the morphism
$$
\varphi(i)= 0^{m}(i+1)\ \text{ for } i\leq k-2\,,\quad \varphi(d-1)= 0^{n}\,.
$$

Let us now study what role do play confluent Parry numbers $\beta$ in systems with negative base $-\beta$.
We have
$$
d_{-\beta}(\ell) =
\begin{cases}(m0)^{k-1}m(m-n)^\omega & \text{ if } d=2k,\\
(m0)^kn^\omega &  \text{ if } d=2k+1.
\end{cases}
$$

Let first $d$ be odd or $m=n$. Then it can be verified using~\eqref{eq_vzdalenosti_zmb_predpis} that we get the same set of distances in $\ZMB$ as in $\ZBP$, and, moreover, they are ``ordered'' the same way, i.e. $\Delta_i=\Delta'_i$, where $i$ always corresponds to the greatest index at which the expansions of two neighbors in $\ZBP$ or $\ZMB$ differ. The infinite word $u_{-\beta}$ coding $\ZMB$ is a fixed point of the antimorphism
$$
\psi(i)= 0^{m}(i+1)\ \text{ for } 0\leq i\leq d-2\,,\quad \psi(d-1)= 0^{n}\,.
$$
Although the prescriptions for the morphism $\varphi$ and antimorphism $\psi$ coincide, for comparing them, we have to use the second iteration,
and compare the morphisms $\varphi^2$, $\psi^2$. We have
\[\begin{array}{c|ll|ll}
i           & \varphi(i)    & \varphi^2(i)  & \psi(i)   & \psi^2(i)\\ \hline
0           & 0^m1          & (0^m1)^m0^m2  & 0^m1      & 0^m2(0^m1)^m\\
1           & 0^m2          & (0^m1)^m0^m3  & 0^m2      & 0^m3(0^m1)^m\\
\vdots  &      \vdots             & \vdots            &    \vdots           & \vdots\\
(d-2)       & 0^m(d-1)      & (0^m1)^m0^n   & 0^m(d-1)  & 0^n(0^m1)^m\\
(d-1)   & 0^n               & (0^m1)^n      & 0^n           & (0^m1)^n
\end{array}\]
wherefrom it can be seen that
$$
\varphi^2(i)(0^m1)^m=(0^m1)^m\psi^2(i)\,,\quad\text{ for all }i\in\{0,\ldots,d-1\}\,.
$$
This means that the morphisms $\varphi^2$, $\psi^2$ are conjugated.

\begin{de}\label{de_konjugace}
Let $\A$ be an alphabet (finite or infinite) and $\pi,\rho:\A^*\rightarrow\A^*$ be morphisms on $\A$. We say that $\pi$ and $\rho$ are conjugated if there exists a word $w\in\A^*$ such that either \[w\pi(a)=\rho(a)w,\text{ for all }a\in\A\,,\text{ or }\pi(a)w=w\rho(a),\text{ for all }a\in\A\,.\] We denote $\pi\sim\rho$.
\end{de}

It is well known that languages of fixed points of conjugated morphisms coincide. As a consequence, we have the following proposition, which is in fact implication $(1)\Rightarrow(2)$ of Theorem~\ref{thm_hlavni_superteorem}.

\begin{prop}\label{p:1->2}
Let $\beta>1$ be a zero of $x^d-mx^{d-1}-\cdots -mx-n$ with $m\geq n\geq 1$, such that $d$ is odd or $n=m$.
Then the infinite words $u_\beta$ and $u_{-\beta}$ have the same language.
\end{prop}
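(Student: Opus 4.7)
The plan is straightforward: assemble the ingredients already developed in the excerpt and invoke the cited fact that conjugated morphisms have fixed points with identical languages. First, since $\varphi(u_\beta)=u_\beta$, we also have $\varphi^2(u_\beta)=u_\beta$; likewise $\psi(u_{-\beta})=u_{-\beta}$ gives $\psi^2(u_{-\beta})=u_{-\beta}$, and crucially $\psi^2$ is a genuine morphism (not merely an antimorphism), so it fits the framework of Definition~\ref{de_konjugace}. Thus both $u_\beta$ and $u_{-\beta}$ are fixed points of morphisms that have just been shown to be conjugated via the word $w=(0^m1)^m$: namely $\varphi^2(i)\,w=w\,\psi^2(i)$ holds for every letter $i\in\{0,\ldots,d-1\}$ by the displayed table immediately preceding the proposition.

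Second, I would invoke the stated principle that fixed points of conjugated morphisms share a common language. If one wanted to make this step self-contained, one iterates the conjugacy relation by induction on $n$ to obtain $\varphi^{2n}(i)\,w=w\,\psi^{2n}(i)$ for every letter $i$ and every $n\geq 1$. Consequently, any factor of $u_\beta$ lying inside some $\varphi^{2n}(i)$ already appears inside $\psi^{2n}(i)$, modulo chopping at most $|w|$ letters off either end, and symmetrically. Since every factor of a fixed point of a non-erasing morphism appears in $\varphi^{2n}(i)$ (respectively $\psi^{2n}(i)$) for large enough $n$ and some letter $i$ occurring in the fixed point, this yields equality of the two languages.

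The only subtlety worth flagging is that $u_\beta$ is a one-sided infinite word while $u_{-\beta}$ is bi-infinite; but since both are obtained by iterating their respective morphisms starting from letters drawn from the same finite alphabet $\{0,\ldots,d-1\}$ (we are in the simple Parry case here, as $d_\beta(1)=m^{d-1}n0^\omega$), this causes no genuine difficulty, only an easy bookkeeping remark when extracting the language of $u_{-\beta}$ from the words $\psi^{2n}(i)$. I do not anticipate any substantial obstacle: the real work has already been carried out in computing $d_{-\beta}(\ell)$, identifying $\psi$, and verifying the conjugacy table; the proposition is essentially a packaged corollary.
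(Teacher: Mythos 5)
Your proposal is essentially the paper's own argument: the displayed table gives $\varphi^2(i)\,(0^m1)^m=(0^m1)^m\,\psi^2(i)$ for every letter $i$, hence $\varphi^2\sim\psi^2$, and since $u_\beta$ and $u_{-\beta}$ are fixed by $\varphi^2$ and by the morphism $\psi^2$ respectively, the well-known fact that fixed points of conjugated morphisms have the same language yields the claim. One correction to your optional ``self-contained'' aside: iterating the conjugacy does not give $\varphi^{2n}(i)\,w=w\,\psi^{2n}(i)$ with the same $w$ (already for $n=2$ one gets $\varphi^{4}(i)\,w=w\,\psi^{2}(\varphi^{2}(i))\neq w\,\psi^{4}(i)$); the correct relation is $\varphi^{2n}(i)\,W_n=W_n\,\psi^{2n}(i)$ with $W_n=\varphi^{2(n-1)}(w)\cdots\varphi^{2}(w)\,w$, so the boundary effects grow with $n$ and a fully self-contained proof should follow the standard argument for conjugated (here primitive) substitutions rather than ``chopping at most $|w|$ letters''.
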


\begin{ex}\label{ex_peknatrida}
Consider now $\beta>1$ zero of $x^d-mx^{d-1}-\cdots-mx-n$, where $d$ is even and $1\leq n<m$.
Relation~\eqref{eq_vzdalenosti_zmb_predpis} implies that not all values of distances in $\ZMB$ correspond to their $\ZBP$ counterparts. In particular,
\[
\begin{aligned}
\Delta'_i &=\Delta_i\,,\quad \text{ for }i=0,\dots,d-2\,,\\
\Delta'_{d-1}&=1+\frac{n}{\beta}=\Delta_{d-1}+1>1.
\end{aligned}
\]
This implies that the morphisms $\varphi^2$ and $\psi^2$ cannot be connected by any similar property as in the previous case.
\end{ex}

\bigskip
In analogy with Theorem~\ref{t:confluenceFrougny}, we would like to compare the spectrum $X(-\beta)$ (defined by~\eqref{eq:spectrumnegative})
with the set of $(-\beta)$-integers. The question, however, is much more complicated in negative base.
We first describe the gaps in $X(-\beta)$ and give an antimorphism under which the gap sequence of $X(-\beta)$ is invariant (see Proposition~\ref{thm_erdos_set}). It will be seen that the gaps and the antimorphism coincide with those for $\ZMB$ when $\beta$ is a zero of~\eqref{eq:confluentpos} with $d$ odd or $m=n$. For such $\beta$, this proves that $\ZMB=X(-\beta)$.
which constitutes implication $(1)\Rightarrow(3)$ of Theorem~\ref{thm_hlavni_superteorem}. The fact that no other $\beta$ has this property (i.e.\ implication $(3)\Rightarrow(1)$) is more complicated and is demonstrated in Section~\ref{sec_tomzuzana}.

\begin{prop}\label{thm_erdos_set}
  Let $\beta>1$. The gaps $y_{j+1}-y_j$ in $X(-\beta)=\{\cdots < y_{-1}<0=y_0<y_1<\cdots\}$ are $\leq 1$. In particular, if $\beta$ is a zero of~\eqref{eq:confluentpos}, then the gaps take values
$$
\Delta_i=\,m\Big(\frac{1}{\beta}+\cdots+\frac{1}{\beta^{d-1-i}}\Big)+\frac{n}{\beta^{d-i}}\quad\text{for $0\leq i\leq d-1$}\,.
$$
  Moreover, the infinite word $u=\cdots u_{-2}u_{-1}|u_0u_1u_2\cdots$ coding the gap sequence of $X(-\beta)$ by $u_j=i$ if $y_{j+1}-y_j=\Delta_i$, is a fixed point of the antimorphism $\psi:\{0,1,\dots,d-1\}^*\to\{0,1,\dots,d-1\}^*$,
  $$
  \psi(i)=0^m(i+1),\text{ for }0\leq i\leq d-2,\qquad \psi(d-1)=0^n\,.
  $$
\end{prop}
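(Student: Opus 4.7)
The plan is to prove the three assertions in order, working with finite digit representations $\gamma(a_N\cdots a_0)$ over the alphabet $\{0,\ldots,\lfloor\beta\rfloor\}$.

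For the general bound on gaps, I would induct on representation length. Set $X_N=\{\gamma(a_N\cdots a_0):a_j\in\{0,\ldots,\lfloor\beta\rfloor\}\}$; the base case $X_0=\{0,1,\ldots,\lfloor\beta\rfloor\}$ has all gaps equal to $1$. For the step, decompose $y\in X_{N+1}$ as $y=a+(-\beta)z$ with $a\in\{0,\ldots,\lfloor\beta\rfloor\}$ and $z\in X_N$. If $a<\lfloor\beta\rfloor$ then $y+1=(a+1)+(-\beta)z\in X_{N+1}$. If $a=\lfloor\beta\rfloor$ and $y<\max X_{N+1}$ then $z>\min X_N$; letting $z'$ be the predecessor of $z$ in $X_N$ (so $0<z-z'\leq 1$ by induction), the half-open interval $(\lfloor\beta\rfloor-\beta(z-z'),\lfloor\beta\rfloor-\beta(z-z')+1]$ lies in $(-1,\lfloor\beta\rfloor+1)$ and has length $1$, hence contains a unique integer $a^\ast\in\{0,\ldots,\lfloor\beta\rfloor\}$, and $y':=a^\ast+(-\beta)z'\in X_{N+1}$ satisfies $y<y'\leq y+1$.

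For the explicit gap values and the antimorphism in the confluent Parry case, the defining relation $\beta^d=m(\beta^{d-1}+\cdots+\beta)+n$ immediately yields $\Delta_0=1$ and the recursion $\Delta_i=(m+\Delta_{i+1})/\beta$ for $0\leq i\leq d-2$, together with $\Delta_{d-1}=n/\beta$. To identify the $\Delta_i$ as the only realized gaps and simultaneously to establish $\psi(u)=u$, I exploit the self-similarity $(-\beta)\cdot X(-\beta)\subset X(-\beta)$. Given consecutive $y_j<y_{j+1}$ in $X(-\beta)$ with $y_{j+1}-y_j=\Delta_i$, the images $-\beta y_{j+1}<-\beta y_j$ (order reversed, since $-\beta<0$) lie at distance $\beta\Delta_i=m+\Delta_{i+1}$ (or $n$ for $i=d-1$). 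I would show that $X(-\beta)$ fills this interval by exactly the $m$ unit shifts $-\beta y_{j+1}+k$ for $k=1,\ldots,m$, followed by a residual gap $\Delta_{i+1}$ up to $-\beta y_j$; for $i=d-1$, the $n$ unit shifts land directly on $-\beta y_j=-\beta y_{j+1}+n$ and the residual vanishes. Reading the corresponding gap labels yields exactly $\psi(i)=0^m(i+1)$ (respectively $\psi(d-1)=0^n$), and the order reversal under multiplication by $-\beta$ turns this self-similarity into an antimorphism, proving $\psi(u)=u$.

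The main obstacle I expect is ruling out extra elements of $X(-\beta)$ between the enumerated unit shifts in the self-similarity argument. Any such intruder, written as $k+(-\beta)w$ with $k\in\{1,\ldots,m\}$ and $w\in X(-\beta)$, forces $w$ into an interval of length $\Delta_{i+1}/\beta<1/\beta$ derived from the bound on the image gap, and eliminating $w$ requires an induction on representation length that tracks the digit structure and uses $(y_j,y_{j+1})\cap X(-\beta)=\emptyset$. Once this is settled, the identity $\beta\Delta_i=m+\Delta_{i+1}$ shows that the $m$ unit steps and the residual $\Delta_{i+1}$ exactly account for the full gap between $-\beta y_{j+1}$ and $-\beta y_j$, closing Steps 2 and 3 at once.
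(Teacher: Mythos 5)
Your first step (gaps at most $1$) is correct and is in substance the argument the paper merely asserts for its sets $A_0=\{0,\dots,\lfloor\beta\rfloor\}$, $A_{n+1}=(-\beta)A_n+\{0,\dots,\lfloor\beta\rfloor\}$, which coincide with your $X_N$; no complaint there. The confluent part, however, remains a plan at exactly the point where the content lies, and as organized it is circular. You start from consecutive points $y_j<y_{j+1}$ of $X(-\beta)$ whose gap is \emph{already assumed} to equal some $\Delta_i$; but the assertion that every gap of $X(-\beta)$ is one of $\Delta_0,\dots,\Delta_{d-1}$ is part of what must be proved, and it cannot be extracted from the self-similarity of the limit set alone, since $(-\beta)^{-1}X(-\beta)\not\subset X(-\beta)$: a gap of $X(-\beta)$ is not a priori the image of a gap. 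The paper resolves this by inducting on $n$ over the finite levels $A_n$ (your $X_N$), with the induction hypothesis that all gaps of $A_n$ lie in $\{\Delta_0,\dots,\Delta_{d-1}\}$ and, in addition, that every gap $\Delta_{d-1}$ is immediately preceded by a gap $\Delta_0=1$; the base case is $A_0$, and the gap word of $A_{n+1}$ is read off inside the images $[-\beta y,-\beta x)$ of the gaps of $A_n$, which also yields the antimorphism.

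Inside that induction the ``no intruder'' step, which you explicitly flag as the main obstacle and only gesture at, becomes finite and checkable: any element of $A_{n+1}\cap[-\beta y,-\beta x)$ is $-\beta z+a$ with $z\in A_n$, $a\le m$, and $z\notin(x,y)$; if $z\le x$ the point is $\ge-\beta x$, while if $z\ge y$ then $\beta(z-y)$ is $\beta$ times a sum of gaps of $A_n$, hence either exceeds $m\ge a$ (as soon as one gap $\Delta_j$ with $j\le d-2$ occurs, because $\beta\Delta_j=m+\Delta_{j+1}>m$) or is a multiple of $n$, in which case $-\beta z+a$ is an integer translate of $-\beta y$ and coincides with one of your enumerated unit shifts. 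The delicate case is precisely $i=d-1$: the image interval has length $n\le m$, the translates $-\beta y+a$ with $a>n$ overflow past $-\beta x$, and the paper disposes of it by processing the pair of gaps $\Delta_0\Delta_{d-1}$ jointly, which is where the ``preceded by $\Delta_0$'' part of the hypothesis is used so that the overflowing points mesh with the translates of $-\beta x$. Your proposal names ``an induction on representation length'' as the needed tool but supplies neither the strengthened induction hypothesis on the gaps of $X_N$ nor any treatment of the $\Delta_{d-1}$ case, so the decisive step of the confluent part is missing rather than merely routine.
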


\begin{proof}
  Let us define the sets
  \begin{align*}
    A_0&=\big\{0,1,\dots,\lfloor\beta\rfloor\big\},\\
    A_{n+1}&=(-\beta)A_{n}+\big\{0,1,\dots,\lfloor\beta\rfloor\big\}.
  \end{align*}
It can be easily verified that $X(-\beta)=\bigcup_{n\in\N}A_n$ and that the gaps between consecutive points of $A_n$ are $\leq1$ for any $\beta>1$. Since $A_{n}\subset\A_{n+1}$, it shows that gaps in $X(-\beta)$ are $\leq 1$.

Let $\beta>1$ be a zero of~\eqref{eq:confluentpos}. We will show by induction that gaps between consecutive points of $A_n$ take only values $\Delta_i$, and that the gap $\Delta_{d-1}$ always follows the gap $\Delta_0$. The idea of the proof is illustrated in Figure~\ref{f}.

\begin{figure}[ht]
\includegraphics[width=\textwidth]{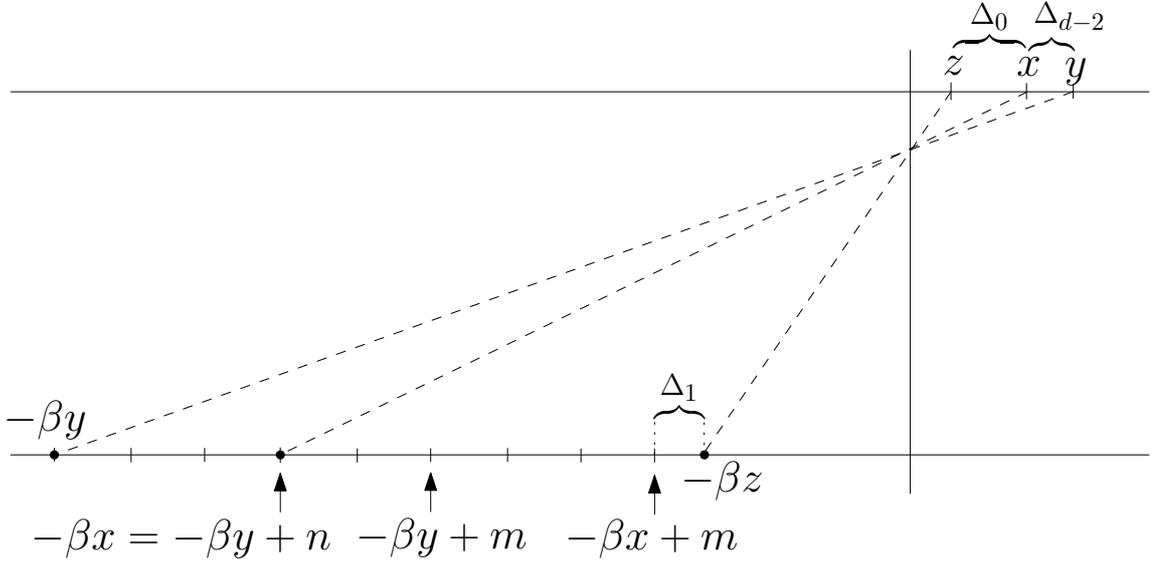}
\caption{Construction of the antimorphism $\psi$ for $u_{-\beta}$, where $\beta$ is the root of~\eqref{eq:confluentpos} with $m=5$, $n=3$.}
\label{f}
\end{figure}

Suppose that $x,y$ are consecutive points in $A_n$ such that $y=x+\Delta_i$ for some $0\leq i<d-1$. Then
\begin{equation}\label{eq:e1}
\{-\beta y\}+ \{0,\dots,m\} = \{-\beta y,-\beta y+1,\dots , -\beta y+m\} = A_{n+1}\cap [-\beta y,-\beta x)\,.
\end{equation}
  Note that all gaps $[-\beta y,-\beta x]\cap A_{n+1}$ are of length $\Delta_0=1$, except $(-\beta x)-(-\beta y+m)=\beta(y-x-\frac m\beta)=\beta(\Delta_i-\frac m\beta)=\Delta_{i+1}$. Moreover, $\Delta_{d-1}$ appears only if $y-x=\Delta_{d-2}$ and is preceded by $\Delta_0=1$.

Suppose now that $z<x<y$ where  $y-x=\Delta_{d-1}=\frac{n}{\beta}$ and $x-z=\Delta_0=1$. Note that $-\beta y+n = -\beta x$, therefore
we have
\begin{equation}\label{eq:e2}
\begin{aligned}
\{-\beta y,-\beta x\}+ \{0,\dots,m\} &= \{-\beta y,-\beta y+1,\dots,-\beta y+n=-\beta x,\dots -\beta x+m\} =\\
 &=A_{n+1}\cap [-\beta y,-\beta z)\,.
\end{aligned}
\end{equation}
Between $-\beta y$ and $-\beta x$ one obtains only gaps $\Delta_0$. Consequently, the gaps between consecutive points of $A_n$ are of the form $\Delta_i$ for every $n$.

From the above relations~\eqref{eq:e1} and~\eqref{eq:e2}, we can also read that $X(-\beta)$ is invariant under the antimorphism
$$
0\to 0^m 1,\ 1\to 0^{m}2,\ \dots,\ (d-2)\to 0^m(d-1),\ (d-1)\to 0^n\,.
$$
\end{proof}

\section{Proof $\boldsymbol{(2)\Rightarrow(1)}$ of Theorem~\ref{thm_hlavni_superteorem}}\label{sec_dan}

Our aim is to show that the fact that $\varphi^2\sim\psi^2$ implies that $\beta$ belongs to a specific class of numbers.
In what follows, we denote by $\varphi$ the canonical morphism of $\beta$. In particular, if $\beta$ is a Parry number, then $\varphi$ is given by~\eqref{eq:substitucesimpleParry} or~\eqref{eq:substitucenonsimpleParry}, and if $\beta$ is not a Parry number, then $\varphi$ is the
morphism over $\N$, given by~\eqref{eq:fabreplus_obecne}. The antimorphism $\psi$ figuring in $\varphi^2\sim\psi^2$ is the infinite
antimorphism~\eqref{thm_antimorfismy} fixing $v_{-\beta}$, if $\varphi$ is infinite; or its projection fixing the infinite word $u_{-\beta}$, otherwise.

\begin{lem}\label{lem_stejne_mezery}
Let $\beta>1$. Denote by $\Delta_i$ the distances in $\ZBP$ given by~\eqref{eq_mezery_thurston} and $\Delta_i'$ the distances in $\ZMB$ given by~\eqref{eq_vzdalenosti_zmb_predpis}. Let $\varphi$ be the canonical morphism of $\beta$ and $\psi$ an antimorphism fixing the infinite word coding $\ZMB$. If $\varphi^2\sim\psi^2$, then either both $\{\Delta_0,\Delta_1,\ldots\}$ and
$\{\Delta'_0,\Delta'_1,\ldots\}$ are infinite sets or they have the same cardinality. Moreover,
$\Delta_i=\Delta'_i$ for all $i$.
\end{lem}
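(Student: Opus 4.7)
The plan is to pass from the word-theoretic conjugation to its abelianization and then propagate the equality through a shared linear recurrence on the gap vectors. First I would observe that the relation $w\varphi^2(a)=\psi^2(a)w$ for every letter $a$ implicitly requires $\varphi^2$ and $\psi^2$ to act on a common alphabet $\A$. Since the alphabet of the canonical morphism $\varphi$ is in bijection with the distinct values of $\Delta_i$ in $\ZBP$ via \eqref{eq_mezery_thurston}--\eqref{eq:substitucenonsimpleParry}, and the alphabet of $\psi$ is in bijection with the values of $\Delta'_i$ in $\ZMB$ via \eqref{eq_vzdalenosti_zmb_predpis} and \eqref{thm_antimorfismy}, the two distance sets have the same cardinality, which settles the first assertion of the lemma.

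Next I would pass to abelianization: $w\varphi^2(a)=\psi^2(a)w$ means that $\varphi^2(a)$ and $\psi^2(a)$ are conjugate words and hence share their Parikh vector, so the incidence matrices satisfy $M_\varphi^{\,2}=M_\psi^{\,2}$. Reading off $\beta\Delta_i=d'_{i+1}\Delta_0+\Delta_{i+1}$ from $\varphi(i)=0^{d'_{i+1}}(i+1)$ (and analogously for $\Delta'$ from~\eqref{thm_antimorfismy}), one sees that $\Delta=(\Delta_i)$ is a positive left $\beta$-eigenvector of $M_\varphi$ and $\Delta'=(\Delta'_i)$ a positive left $\beta$-eigenvector of $M_\psi$. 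Squaring, both $\Delta$ and $\Delta'$ are positive left $\beta^2$-eigenvectors of the common matrix $N \definl M_\varphi^{\,2}=M_\psi^{\,2}$.

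A direct computation gives $\varphi^2(i) = (0^{d'_1}1)^{d'_{i+1}}\,0^{d'_{i+2}}(i+2)$, so column $i$ of $N$ is supported in rows $0,1,i+2$ only, and the eigenvalue equation reads
\[
\beta^2\Delta_i \;=\; \Delta_{i+2} + d'_{i+1}\Delta_1 + \bigl(d'_1 d'_{i+1}+d'_{i+2}\bigr)\Delta_0,
\]
with the identical relation holding for $\Delta'$ in place of $\Delta$, since the coefficients come from the common matrix $N$. Set $v_i\definl\Delta_i-\Delta'_i$. The normalization $\Delta_0=\Delta'_0=1$ (both verified directly from $\Delta_0=\sum_i d'_i/\beta^i=1$ and $\Delta'_0=|(-\beta)^0|=1$) gives $v_0=0$, and the recurrence becomes the homogeneous relation $v_{i+2}=\beta^2 v_i-d'_{i+1}v_1$. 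Iterating from $v_0=0$ produces $v_{2k}=-v_1\bigl(d'_1\beta^{2(k-1)}+d'_3\beta^{2(k-2)}+\cdots+d'_{2k-1}\bigr)$, whose modulus is at least $|v_1|\beta^{2(k-1)}$ because $d'_1=\lfloor\beta\rfloor\geq 1$. Since the distances $\Delta_i$ and $\Delta'_i$ are uniformly bounded in $i$, so is $|v_{2k}|$, which forces $v_1=0$; the recurrence then yields $v_i=0$ for every $i$, so $\Delta_i=\Delta'_i$.

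The main obstacle is the growth argument in the last step: one has to ensure uniform boundedness of $\Delta'_i$ in the non-Parry case (an extension of the standard structural fact that gaps between consecutive $(-\beta)$-integers stay in a compact set for fixed $\beta$), and one must handle the boundary image in the Parry setting, where the last letter of the alphabet is mapped by $\varphi$ via~\eqref{eq:substitucesimpleParry} or~\eqref{eq:substitucenonsimpleParry} instead of by the uniform rule $\varphi(i)=0^{d'_{i+1}}(i+1)$, so that the last column of $N$ requires separate bookkeeping to confirm that the same homogeneous recurrence still governs $v_i$ there.
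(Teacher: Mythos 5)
Your opening moves coincide with the paper's: the conjugation $w\varphi^2(a)=\psi^2(a)w$ forces equal Parikh vectors of $\varphi^2(a)$ and $\psi^2(a)$, the gap vectors are left $\beta$-eigenvectors of the respective incidence matrices, and with $\Delta_0=\Delta'_0=1$ one gets the same linear relation for $(\Delta_i)$ and $(\Delta'_i)$ (the paper writes it for $i=0$ as $\beta^2\Delta_0=a\Delta_0+b\Delta_1+\Delta_2=a\Delta'_0+b\Delta'_1+\Delta'_2$ with $a=m^2+d_2$, $b=m$). The genuine gap is in how you force $v_1=\Delta_1-\Delta'_1=0$. Your route iterates $v_{i+2}=\beta^2v_i-d'_{i+1}v_1$ indefinitely and then invokes uniform boundedness of the $\Delta'_i$. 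Two problems. First, that boundedness is not a standard fact you can cite: neither \eqref{eq_vzdalenosti_zmb_predpis} nor \cite{ADMP12} gives a bound on $\Delta'_i$ uniform in $i$ for arbitrary $\beta$ (the only a priori bound in the paper is $\Delta_i\le 1$ on the positive-base side), and you flag it without proving it. Second, and more seriously, when $\beta$ is a Parry number the hypothesis $\varphi^2\sim\psi^2$ concerns the projected morphisms on a \emph{finite} alphabet; your uniform column formula $\varphi^2(i)=(0^{d'_1}1)^{d'_{i+1}}0^{d'_{i+2}}(i+2)$ fails at the last one or two letters (see \eqref{eq:substitucesimpleParry}, \eqref{eq:substitucenonsimpleParry}), there are only finitely many relations, and the boundary ones are genuinely different (for simple Parry the last letter gives $\beta^2v_{K-1}=d_Kv_1$, not your recurrence). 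So the ``let $k\to\infty$'' growth argument is simply unavailable there; this is not ``separate bookkeeping'' but a missing argument — you would have to show that a finite linear system forces $v_1=0$, which is not automatic.

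Both holes close if you replace the growth argument by the paper's local one. Since $\min(1)\in\{m-1,m\}$ with $m=\lfloor\beta\rfloor$, formula \eqref{eq_vzdalenosti_zmb_predpis} gives $\Delta'_1=\beta-\min(1)\in\{\Delta_1,\Delta_1+1\}$; if $\Delta'_1=\Delta_1+1$, your relation for $i=0$ (with $\Delta_0=\Delta'_0=1$) yields $\Delta'_2=\Delta_2-m\le 1-m\le 0$, impossible since distances are positive. Hence $v_1=0$, and then a forward induction through the letters — each new letter $i+2$ occurs exactly once in the image of letter $i$, and the boundary images introduce no new letters — gives $\Delta_i=\Delta'_i$ in both the finite- and infinite-alphabet cases; this needs only $\Delta_i\le 1$, never boundedness of $\Delta'_i$. (Alternatively, the observation $\Delta'_1\ge\Delta_1$ would let your growth argument run on $\Delta_{2k}\le 1$ alone, but it still would not cover the Parry case.) You should also dispose of the one- and two-letter alphabets (integer and quadratic $\beta$) separately, as the paper does by invoking \cite{MV13}, since your column formula presupposes that the letters $0,1,2$ exist.
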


\begin{proof}
We can assume the existence of at least three distinct distances in $\ZBP$ and $\ZMB$, since the case with one distance corresponds to integer bases and two distances correspond to quadratic bases, already solved in $\cite{MV13}$.

Thus $\varphi$ is defined over at least three letters. Denoting $m=\lfloor\beta\rfloor$, one has $\varphi(0)=0^m1$, $\varphi(1)=0^{d_2}2$, which implies $\varphi^2(0)=(0^m1)^m0^{d_2}2$. Since $\varphi^2\sim\psi^2$, the words $\varphi^2(0)$ and $\psi^2(0)$ both contain the same
number of zeros and ones and only one letter $2$ and we have
\begin{equation}\label{eq_mezery_z_obrazu_nuly}
\beta^2\Delta_0=a\Delta_0+b\Delta_1+\Delta_2=a\Delta'_0+b\Delta'_1+\Delta'_2\,,
\end{equation}
where $a=m^2+d_2$ and $b=m$. It holds that $\Delta_0=\Delta'_0=1$, and since $\min(1)\in\{m-1,m\}$, $m=\lfloor\beta\rfloor$, by~\eqref{eq_vzdalenosti_zmb_predpis}, we obtain $\Delta'_1\in\{\Delta_1,\Delta_1+1\}$.
Assuming $\Delta'_1=\Delta_1+1$ together with $\Delta_1<1 $ leads to $\Delta'_2\leq 0$, which is a contradiction.

Consequently, $\Delta_0=\Delta'_0$, $\Delta_1=\Delta'_1$ and $\eqref{eq_mezery_z_obrazu_nuly}$ imply $\Delta_2=\Delta'_2$. We can obtain the statement by repeating the same process for $\varphi^2(i)\sim\psi^2(i)$, $i\geq1$.
\end{proof}

In the sequel, we use the following statement from combinatorics on words; its proof can be found in~\cite{lothaire1}. It will be useful for determining the word $w$ for conjugation of morphisms as in Definition~\ref{de_konjugace}.

\begin{prop}\label{p:lothaire}
Let $x,y,w\in\A^*$ satisfy $wx = yw.$ Then $w$ is a prefix of $y^\omega.$
\end{prop}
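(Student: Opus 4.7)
The plan is to establish this classical conjugacy lemma by strong induction on $|w|$, exploiting the fact that the equality $wx=yw$ forces $y$ to be a prefix of $wx$ and $w$ to be a prefix of $yw$, which in turn constrains the relationship between $w$ and $y$.

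First I would dispose of the trivial edge cases: if $w=\epsilon$, then $w$ is vacuously a prefix of $y^\omega$; if $y=\epsilon$, the hypothesis forces $x=\epsilon$ as well, and under the natural convention $y^\omega=\epsilon$ the conclusion again holds trivially (one may equivalently restrict the statement to $y\neq\epsilon$, as is customary). So assume both $w$ and $y$ are nonempty.

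For the main argument I would split on whether $|w|<|y|$ or $|w|\geq|y|$. In the first case, $w$ is a prefix of the left-hand side $wx$, hence also of the right-hand side $yw$; since $|w|<|y|$, it is in fact a prefix of $y$, and therefore of $y^\omega$. In the second case, $y$ is a prefix of $yw=wx$ and $|y|\leq|w|$, so $y$ is a prefix of $w$. Writing $w=yw'$ with $|w'|=|w|-|y|<|w|$ and substituting back into $wx=yw$, I obtain $yw'x=y\cdot yw'$; cancelling the common prefix $y$ yields $w'x=yw'$. The induction hypothesis applied to the strictly shorter word $w'$ gives that $w'$ is a prefix of $y^\omega$, and consequently $w=yw'$ is a prefix of $y\cdot y^\omega=y^\omega$.

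No genuine obstacle arises, since the statement is a foundational fact in combinatorics on words, closely tied to the periodicity phenomena of Fine and Wilf's theorem. The purpose of recording it at this point in the paper is to have a ready tool for identifying the conjugacy word $w$ satisfying $w\pi(a)=\rho(a)w$ for every letter $a$, once one has established that two morphisms $\pi$ and $\rho$ are conjugated in the sense of Definition~\ref{de_konjugace}; this will be applied repeatedly in the analysis of $\varphi^2\sim\psi^2$ carried out in Section~\ref{sec_dan}.
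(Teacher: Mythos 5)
Your induction is correct and complete: splitting on $|w|<|y|$ versus $|w|\geq|y|$, peeling off the prefix $y$ from $w$ in the second case and recursing on the strictly shorter $w'$ with $w'x=yw'$ is exactly the standard argument for this conjugacy lemma. Note, however, that the paper itself gives no proof at all -- Proposition~\ref{p:lothaire} is stated with a pointer to Lothaire's book~\cite{lothaire1} -- so your self-contained argument is more than what the authors record; it buys a reader independence from the reference at the cost of a few lines, and it makes transparent why the only structural input is the cancellation $yw'x=y\,yw'\Rightarrow w'x=yw'$. One small inaccuracy in your treatment of the degenerate case: if $y=\epsilon$ and $w\neq\epsilon$, the hypothesis is satisfiable (take $x=\epsilon$, so $wx=w=yw$), yet $w$ is certainly not a prefix of $\epsilon^\omega$, so the conclusion does \emph{not} ``hold trivially'' under any convention; the correct resolution is the one you mention parenthetically, namely that the statement tacitly assumes $y\neq\epsilon$ (which is harmless here, since in the paper's applications $y$ is a word of the form $\psi^2(a)$ or $\varphi^2(a)$ and these are nonempty). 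With that sentence repaired, the proof stands.
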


It follows that when morphisms $\varphi$ and $\psi$ are conjugated, i.e.\ there exists $w$ such that $w\varphi(a)=\psi(a) w$ for all $a\in\A,$ or $\varphi(a)w = w\psi(a)$ for all $a\in\A,$ then $w$ is a common prefix of all $(\psi(a))^\omega$ or $(\varphi(a))^\omega$ respectively.

\begin{prop}\label{prop_konjugace_implikuje_parry}
Let $\varphi^2\sim\psi^2$. Then $\beta$ is a Parry number.
\end{prop}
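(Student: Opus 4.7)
The plan is to argue by contradiction: suppose $\beta$ is not a Parry number, so that the canonical morphism is the infinite-alphabet substitution~\eqref{eq:fabreplus_obecne}, $\varphi(i)=0^{d_{i+1}'}(i+1)$ over $\N$, and $d_\beta^*(1)=d_1'd_2'\cdots$ is not eventually periodic. From Lemma~\ref{lem_stejne_mezery} we already have $\Delta_i=\Delta'_i$ for every $i$, so the images $\varphi^2(i)$ and $\psi^2(i)$ are finite and of equal length.

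I would then write both images explicitly. One computes
\[
\varphi^2(i)=(0^m1)^{d_{i+1}'}0^{d_{i+2}'}(i+2),\qquad m=\lfloor\beta\rfloor,
\]
where the letter $i+2$ occurs exactly once and at the last position. For $i$ even, \eqref{thm_antimorfismy} gives
\[
\psi^2(i)=\psi(\widetilde{R_i})\,R_{i+1}\,(i+2)\,\widetilde{S_{i+1}}\,\psi(S_i),
\]
and symmetrically for $i$ odd, with $i+2$ now in the interior of $\psi^2(i)$. A conjugating word $w$ for $\varphi^2\sim\psi^2$ is, by Proposition~\ref{p:lothaire}, a prefix (or suffix) of $(\psi^2(a))^\omega$ for every letter $a$, depending on the direction.

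The heart of the proof is a letter-by-letter comparison of the two sides of the conjugation identity. In the direction $w\varphi^2(a)=\psi^2(a)w$, the identity forces $\varphi^2(i)$ to end with $w$ for every $i\in\N$; since the last letter $i+2$ varies with $i$, this is possible only if $w=\epsilon$, giving $\varphi^2=\psi^2$, and a back-and-forth matching of initial and terminal segments then pins the finite words $S_i,R_i$ recursively in terms of $(d_i')$ and yields eventual periodicity of $(d_i')$. In the direction $\varphi^2(a)w=w\psi^2(a)$, the same argument turns $w$ into a common prefix of every $\varphi^2(i)$ and a common suffix of every $\psi^2(i)$, which uniformly bounds $|w|$; matching the positions of $i+2$ then produces a recurrence between $d_{i+1}',d_{i+2}'$ and the lengths $|S_i|,|R_i|,|S_{i+1}|,|R_{i+1}|$, whose iteration together with~\eqref{eq_vzdalenosti_zmb_predpis} keeps $|S_i|+|R_i|$ bounded and again forces $(d_i')$ to be eventually periodic. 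Either outcome contradicts the non-Parry assumption.

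The main obstacle is the parity bookkeeping in~\eqref{thm_antimorfismy}: the even and odd cases produce structurally different recurrences that mix $S_k$ and $R_k$, and both must be tracked in parallel to obtain the uniform bound on $|S_i|+|R_i|$. Once this bound is available, the translation into eventual periodicity of $d_\beta^*(1)$ via the combinatorial description of $\min(k)$ and $\max(k)$ from~\cite{ADMP12} is essentially formal.
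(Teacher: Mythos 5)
Your skeleton is the same as the paper's (write out $\varphi^2(i)=(0^m1)^{d'_{i+1}}0^{d'_{i+2}}(i+2)$, express $\psi^2(i)$ via~\eqref{thm_antimorfismy}, split into the two conjugation directions, and invoke Proposition~\ref{p:lothaire}), and your last-letter argument ruling out a nonempty $w$ in the direction $w\varphi^2(i)=\psi^2(i)w$ is fine. But the decisive steps are only announced, and as stated they do not close. First, you never compute $\psi$ on the letters $0$ and $1$. The paper gets this from Lemma~\ref{lem_stejne_mezery}: $\Delta'_1=\Delta_1<1$ forces $\min(1)=m$, so (after discarding the quadratic case $\DMBL=m0^\omega$) $\DMBL=m0^{2k-1}a\cdots$, whence $S_0=0^m$, $R_0=\epsilon$, $R_1=0^m$, $S_1=\epsilon$, i.e.\ $\psi(0)=0^m1$, $\psi(1)=0^m2$, $\psi^2(0)=0^m2(0^m1)^m$, and comparing letter counts with $\varphi^2(0)$ already yields $d_2=m$. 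Without these explicit values, the words $S_i,R_i$ in your formula for $\psi^2(i)$ are unknowns, so neither the residual case $w=\epsilon$ (i.e.\ $\varphi^2=\psi^2$) nor the shape of the conjugating word can be settled by inspection.

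Second, and more seriously, your two endgames are asserted rather than proved: (i) that $\varphi^2=\psi^2$ would pin down $S_i,R_i$ recursively and yield eventual periodicity of $(d'_i)$, and (ii) that in the direction $\varphi^2(i)w=w\psi^2(i)$ a recurrence among $d'_{i+1},d'_{i+2},|S_i|,|R_i|,|S_{i+1}|,|R_{i+1}|$ keeps $|S_i|+|R_i|$ bounded and again forces eventual periodicity. Neither recurrence is written down, and boundedness of $|S_i|+|R_i|$ does not by itself imply eventual periodicity of $\DBUS$; this is exactly the point where a proof is needed. The paper's conclusion is concrete and short once $\psi(0),\psi(1),d_2$ are known: by Proposition~\ref{p:lothaire} the conjugating word is a prefix of $\big((0^m1)^m0^m2\big)^\omega$ with suffix $(0^m1)^m$, comparison with $\varphi^2(1)w=w\psi^2(1)$ gives $w=(0^m1)^m$, hence $(0^m1)^m$ is a prefix of every $\varphi^2(i)$, forcing $d'_{i+1}=m$ for all $i$, i.e.\ $\DBU=m^\omega$ and $\beta\in\N$, contradicting aperiodicity. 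You need to replace the vague periodicity-from-bounded-lengths argument by a determination of $w$ and of the digits $d'_i$ of this kind; as it stands the proposal has a genuine gap.
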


\begin{proof}
If the morphisms $\varphi^2$ and $\psi^2$ are conjugated, then the distances between consecutive elements in $\ZBP$ and in $\ZMB$ coincide. This in turn implies that $\Delta_i'\leq 1$ for all $i$. From $\Delta'_1=\beta-\min(1)$ we obtain that $\min(1)=m=\lfloor\beta\rfloor$, hence the string $m0^\omega$ is \hbox{$(-\beta)$-admissible}. Necessarily, either $\DMBL=m0^\omega$ and thus $\beta$ is a quadratic number, zero of $x^2-mx-m$, or
$$
\DMBL=m0^{2k-1}a\cdots,\ a,k\geq1\,.
$$
In the latter case we have $\min(0)=\epsilon=\max(0)$, $\min(1)=m$, $\max(1)=0$, $\min(2)=m0$, $\max(2)=0m$, which, by~\eqref{thm_antimorfismy}, gives
$\psi(0)=0^m1$, $\psi(1)=0^m2$. Consequently, $\psi^2(0)=0^m2(0^m1)^m$. From $\varphi^2\sim\psi^2$, we derive that that the word $\varphi^2(0)=(0^m1)^m0^{d_2}2$ has the same number of occurrences of $0$ as the word $\psi^2(0)$, i.e. $d_2=m$.
Since now $\beta$ is not quadratic, we derive that
\begin{equation}\label{eq:mm}
\DBUS=mm\cdots\,.
\end{equation}

Now assume that $\beta>1$ is not a Parry number, hence $\DBUS$ is
aperiodic. We compare the morphisms $\varphi^2$ and $\psi^2$,
\[
\begin{array}{c|ll|ll}
i & \varphi(i) & \varphi^2(i) & \psi(i) & \psi^2(i)\\ \hline
0           & 0^m1                  & (0^m1)^m0^{m}2                          & 0^m1  & 0^m2(0^m1)^m\\
1           & 0^{m}2                  & (0^m1)^{m}0^{d_3}3                      & 0^m2  & \psi(2)(0^m1)^m\\
\vdots  & \vdots                    & \vdots                                    & \vdots    & \vdots\\
k           & 0^{d_{k+1}}(k+1)  & (0^m1)^{d_{k+1}}0^{d_{k+2}}(k+2)  & \psi(k)   & \psi^2(k)\\
\vdots  & \vdots                    & \vdots                                    & \vdots    & \vdots
\end{array}
\]
If $\varphi^2\sim\psi^2$, then there exists a word $w$ such that $\varphi^2(i)w=w\psi^2(i)$ for all $i$ or $w\varphi^2(i)=\psi^2(i)w$ for all $i$. Assume that $w\varphi^2(i)=\psi^2(i)w$. From Proposition~\ref{p:lothaire}, $w$ is a prefix of $\big(\psi^2(0)\big)^\omega= \big(0^m2(0^m1)^m\big)^\omega$. This is not possible, since from $w\varphi^2(1)=\psi^2(1)w$, the last letter of $w$ is $3$. Therefore necessarily $\varphi^2(i)w=w\psi^2(i)$ for all $i$, and $w$ is a prefix of $\big(\varphi^2(0)\big)^\omega= \big((0^m1)^m0^m2\big)^\omega$, moreover, having suffix $(0^m1)^m$. 

Comparing with $\varphi^2(1)w=w\psi^2(1)$, we obtain $w=(0^m1)^m$, which therefore must be a prefix of all $\varphi^2(i)$ for $i\geq 0$. This implies $d_{i+1}=m$ for $i\geq 0$, i.e. $\DBU=m^\omega$, which gives $\beta\in\N$. This shows that $\DBU$ cannot be aperiodic.
\end{proof}

\begin{prop}\label{prop_konjugace_implikuje_simpleparry}
Let $\varphi^2\sim\psi^2$. Then $\beta$ is a simple Parry number.
\end{prop}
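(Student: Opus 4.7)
The plan is to suppose for contradiction that $\beta$ is a Parry number that is not simple, so
\[
d_\beta(1) = d_1\cdots d_K(d_{K+1}\cdots d_{K+p})^\omega
\]
with $K\geq 0$ and $p\geq 1$ chosen minimally, $d=K+p$, and $\varphi$ acts on $\{0,\ldots,d-1\}$ by~\eqref{eq:substitucenonsimpleParry} with the wrap-around $\varphi(d-1)=0^{d_d}K$. Set $m=\lfloor\beta\rfloor$; the standing assumption of at least three distinct distances gives $d\geq 3$. Following the opening of the proof of Proposition~\ref{prop_konjugace_implikuje_parry}, the conjugation reduces to $\varphi^2(i)w=w\psi^2(i)$ for all $i$, with $w=(0^m1)^m$ pinned down by matching $\varphi^2(0)=(0^m1)^m0^m2$ against $\psi^2(0)=0^m2(0^m1)^m$.

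The main step is to use Proposition~\ref{p:lothaire}: $w=(0^m1)^m$ must be a prefix of $\varphi^2(i)^\omega$ for every letter $i$. For each $i\leq d-3$ the word $\varphi^2(i)=(0^m1)^{d_{i+1}}\cdot 0^{d_{i+2}}\cdot (i+2)$ carries the letter $i+2\geq 2$, and a short position-by-position check at index $d_{i+1}(m+1)+m$ rules out $d_{i+1}<m$. This yields $d_1=\cdots=d_{d-2}=m$. The same analysis applied to $\varphi^2(d-1)=(0^m1)^{d_d}\cdot\varphi(K)$ in the case $K\geq 1$, where $\varphi(K)$ still carries a letter $\geq 2$, forces $d_d=m$; combined with the minimality of $K$ (which prevents rewriting $d_\beta(1)$ with a smaller preperiod), this leads to $d_\beta(1)=m^\omega$, hence $\beta=m+1\in\N$, contradicting the standing assumption.

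The remaining obstacle is the case $K=0$, where $\varphi^2(d-1)=(0^m1)^{d_p+1}$ is a power of $(0^m1)$ and the prefix condition imposes no constraint on $d_p$. Here I would translate the purely periodic expansion $d_\beta(1)=(m^{p-1}d_p)^\omega$ into a polynomial equation by summing the geometric series, obtaining $\beta^p-m\beta^{p-1}-\cdots-m\beta-(d_p+1)=0$. Setting $n=d_p+1$: if $1\leq n\leq m$, then $\beta$ is a root of the confluent polynomial~\eqref{eq:confluentpos}, and Frougny's characterization~\eqref{eq:confldb} forces the greedy expansion to be $d_\beta(1)=m^{p-1}n\,0^\omega$, a simple-Parry sequence contradicting non-simplicity; if $n=m+1$, then $\beta=m+1\in\N$, again a contradiction. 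Either way, $\beta$ must be a simple Parry number.
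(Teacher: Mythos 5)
Your strategy is the paper's: reduce $\varphi^2\sim\psi^2$ to $\varphi^2(i)w=w\psi^2(i)$ with $w=(0^m1)^m$, then use Proposition~\ref{p:lothaire} to force $(0^m1)^m$ to be a prefix of each $\varphi^2(i)$ and conclude that the digits of $\DBUS$ equal $m$. The steps giving $d_1=\cdots=d_{d-2}=m$ (rows $i\le d-3$) and $d_d=m$ (row $i=d-1$, when $K\ge1$) are sound. The genuine gap is the endgame: you never control the digit $d_{d-1}=d_{K+p-1}$, because you skip the row $i=d-2$, where $\varphi^2(d-2)=(0^m1)^{d_{d-1}}0^{d_d}K$. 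This is precisely the delicate digit, and it is why the paper splits into $k\ge2$ and $k=1$: for $K\ge2$ the letter $K\ge2$ in that row forces $d_{d-1}=m$ by the same prefix argument, while for $K=1$ the letter is $1$, compatible with the prefix $(0^m1)^m$, so nothing comes out of that row and one must instead observe that $\DBUS=m(m^{p-2}d_{d-1}m)^\omega=(m^{p-1}d_{d-1})^\omega$ would be purely periodic, contradicting minimality of the preperiod (equivalently the strict Parry condition). Your substitute, ``combined with the minimality of $K$ this leads to $d_\beta(1)=m^\omega$,'' is not a valid inference. Concretely, if $K\ge2$ and $p=1$, your derived digits give $\DBUS=m^{d-2}d_{d-1}m^\omega$ with $d_{d-1}$ unknown; minimality of $K$ forces $d_{d-1}\neq m$, i.e.\ it \emph{prevents} $d_\beta(1)=m^\omega$, and a contradiction must come from elsewhere --- either from the skipped row $i=d-2$, or from the fact that the suffix $m^\omega\lsucc\DBUS$ violates the admissibility property that every suffix of $\DBUS$ is $\lpreceq\DBUS$ (stated after~\eqref{eq_mezery_thurston}). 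If instead $p\ge2$, the correct conclusion is not $d_\beta(1)=m^\omega$ but an immediate contradiction with minimality of $K$, since then $d_K=m=d_{K+p}$. So the case analysis on $K$ (or an explicit appeal to admissibility) cannot be waved away; as written the proof does not close.

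Two smaller points. The case $K=0$ you labour over is vacuous: a purely periodic $d_\beta(1)$ has a shifted copy of itself as a suffix, contradicting the strict condition~\eqref{eq_admissibilita_renyi} applied to $d_\beta(\beta-\lfloor\beta\rfloor)$; this is why the paper takes $k\ge1$. Your Frougny-style resolution of that case is harmless, but note that even there you silently set $d_{p-1}=m$, which your rows $i\le d-3$ do not give --- the same uncontrolled second-to-last digit. Finally, matching $\varphi^2(0)$ against $\psi^2(0)$ alone does not pin down $w$ (any $w\in\big((0^m1)^m0^m2\big)^*(0^m1)^m$ is consistent); like the paper, you also need the image of the letter $1$ and the facts $\psi(0)=0^m1$, $\psi(1)=0^m2$, $d_2=m$ imported from the proof of Proposition~\ref{prop_konjugace_implikuje_parry}. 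This last point is minor, but the missing treatment of $d_{d-1}$ is a real gap.
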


\begin{proof}
Assume that $\beta>1$ is a non-simple Parry number, i.e. let
\[
\DBUS=d_1\cdots d_k(d_{k+1}\cdots d_{k+p})^\omega
\]
for $k,p\geq 1$ chosen minimal. As we have shown (cf.~\eqref{eq:mm}), $\varphi^2\sim\psi^2$ implies
that $\DBUS$ (not purely periodic) has prefix $mm$, hence $k+p\geq 3$.

We distinguish several subcases. At first, if $k+p=3$,
then either $d_\beta^*(1)=m(md_3)^\omega$ (case A) or
$d_\beta^*(1)=mm(d_3)^\omega$ (case B). Hence
\[
\begin{array}{c|ll|ll|ll} i & \varphi(i)\ (A) & \varphi^2(i)\
(A)     & \varphi(i)\ (B) & \varphi^2(i)\ (B)       & \psi(i)   &
\psi^2(i)\\ \hline
0 & 0^m1                & (0^m1)^m0^m2      & 0^m1              & (0^m1)^m0^m2          & 0^m1      & 0^m2(0^m1)^m\\
1 & 0^m2                & (0^m1)^m0^{d_3}1  & 0^m2              & (0^m1)^m0^{d_3}2      & 0^m2      & \psi(2)(0^m1)^m\\
2 & 0^{d_3}1            & (0^m1)^{d_3}0^m2  & 0^{d_3}2          & (0^m1)^{d_3}0^{d_3}2  & \psi(2)   & \psi^2(2)
\end{array}
\]
In both cases, we can use Proposition~\ref{p:lothaire} and similarly as in the proof of Proposition~\ref{prop_konjugace_implikuje_parry}, we derive that $\varphi^2(i)w=w\psi^2(i)$ where $w=(0^m1)^m$. Necessarily $d_3=m$, which is a contradiction with $\DBUS$ not being
purely periodic.

Assume that $k+p\geq 4$. Then there are at least four distinct
gaps $\Delta_{0,1,2,3}$ in $\ZBP$ and $\ZMB$, and we have
\begin{equation}\label{eq_tabulka_nonsimple_Parry}
\begin{array}{c|ll|ll}
i           & \varphi(i)                & \varphi^2(i)                                                  & \psi(i)       & \psi^2(i)\\ \hline
0           & 0^m1                      & (0^m1)^m0^m2                                              & 0^m1          & 0^m2(0^m1)^m\\
1           & 0^m2                      & (0^m1)^m0^{d_3}3                                          & 0^m2          & \psi(2)(0^m1)^m\\
\vdots  & \vdots                        & \vdots                                                            & \vdots            & \vdots\\
(k\!+\!p\!-\!3) & 0^{d_{k+p-2}}(k\!+\!p\!-\!2)  & (0^m1)^{d_{k+p-2}}0^{d_{k+p-1}}(k\!+\!p\!-\!1)    & \psi(k\!+\!p\!-\!3)   & \psi^2(k\!+\!p\!-\!3)\\
(k\!+\!p\!-\!2) & 0^{d_{k+p-1}}(k\!+\!p\!-\!1)  & (0^m1)^{d_{k+p-1}}0^{d_{k+p}}k                            & \psi(k\!+\!p\!-\!2)   & \psi^2(k\!+\!p\!-\!2)\\
(k\!+\!p\!-\!1) & 0^{d_{k+p}}k              & (0^m1)^{d_{k+p}}\varphi(k)                                & \psi(k\!+\!p\!-\!1)   & \psi^2(k\!+\!p\!-\!1)
\end{array}
\end{equation}
where $\varphi(k)=0^{d_{k+1}}k$ if $p=1$ and $\varphi(k)=0^{d_{k+1}}(k+1)$ otherwise.
As before, we derive that $\varphi^2(i)(0^m1)^m=(0^m1)^m\psi^2(i)$, and therefore $\varphi^2(i)$ has
$(0^m1)^m$ as its prefix for all $i\in\{0,\ldots,k+p-1\}$.
All but the last two rows in \eqref{eq_tabulka_nonsimple_Parry}
then imply  $d_3=\cdots=d_{k+p-2}=m$.

If $k\geq 2$, then also $d_{k+p-1}=d_{k+p}=m$ and we have a contradiction,
$\DBUS=m^\omega$. If, on the other hand, $k=1$, the last line
of~\eqref{eq_tabulka_nonsimple_Parry} then implies
\[\varphi^2(p)=(0^m1)^{d_{p+1}}0^m2\] and necessarily $d_{p+1}=m$,
hence $\DBUS=m(mm\cdots md_pm)^\omega$. Either $d_p=m$, which gives $\DBUS=m^\omega$, or $d_p<m$ and we get a
contradiction with minimality of $k,p\geq 1$.
\end{proof}

Now we can proceed with the proof of the implication $(2)\Rightarrow(1)$ of the main Theorem~\ref{thm_hlavni_superteorem}. 
%

\begin{prop}
Let $\varphi^2\sim\psi^2$. Then $\beta$ is a zero of $x^d-mx^{d-1}-\cdots -mx -n$, $m\geq n\geq1$, $d\geq 1$, with $m=n$ if $d$ is even.
\end{prop}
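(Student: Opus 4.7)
The plan is to leverage the strong structural conclusions already available from Propositions~\ref{prop_konjugace_implikuje_parry} and~\ref{prop_konjugace_implikuje_simpleparry}. These tell us that $\beta$ is a simple Parry number with $d_\beta(1)=d_1d_2\cdots d_k\,0^\omega$ satisfying $d_k\neq 0$ and $d_1=d_2=m=\lfloor\beta\rfloor$, and that the conjugation $\varphi^2\sim\psi^2$ necessarily takes the form $\varphi^2(i)\,w=w\,\psi^2(i)$ with $w=(0^m1)^m$. Two tasks remain: first, to force $d_3=\cdots=d_{k-1}=m$ so that $d_\beta(1)=m^{d-1}n\,0^\omega$ with $d=k$ and $n=d_k\in\{1,\ldots,m\}$; second, to deduce $n=m$ when $d$ is even.

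For the first task, I would write $\varphi^2$ explicitly for the canonical simple Parry morphism,
\[
\varphi^2(i)=(0^m1)^{d_{i+1}}0^{d_{i+2}}(i+2)\ \text{for }0\le i\le k-3,\quad \varphi^2(k-2)=(0^m1)^{d_{k-1}}0^{d_k},\quad \varphi^2(k-1)=(0^m1)^{d_k},
\]
and apply Proposition~\ref{p:lothaire} to the conjugation equation to conclude that $w=(0^m1)^m$ must be a prefix of $(\varphi^2(i))^\omega$ for every $i$. For $0\le i\le k-3$ this forces $d_{i+1}=m$: if $d_{i+1}<m$, then after the prefix $(0^m1)^{d_{i+1}}$ is consumed, the word $\varphi^2(i)$ continues with $0^{d_{i+2}}(i+2)$, whose ``blocker'' letter $i+2\ge 2$ inevitably falls at a position where $w$ demands either $0$ or $1$, a contradiction. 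The edge case $i=k-2$ is a bit more subtle because $\varphi^2(k-2)$ ends in $0^{d_k}$ with no blocker letter, so $w$ may wrap around into the periodic continuation. I would handle this by tracking positions of the letter $1$: in $w$ the letters $1$ are separated by blocks of exactly $m$ zeros, whereas in $((0^m1)^{d_{k-1}}0^{d_k})^\omega$ with $d_{k-1}<m$ the first pair of consecutive $1$'s that straddles the boundary $0^{d_k}$ is separated by $m+d_k$ zeros, and $d_k\ge 1$ makes this strictly too large. Hence $d_{k-1}=m$ as well, and the first task is complete.

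For the parity constraint I would avoid any further morphism computation and instead appeal to the tools already at hand. Lemma~\ref{lem_stejne_mezery} forces $\Delta_i=\Delta'_i$ for every $i$ whenever $\varphi^2\sim\psi^2$. On the other hand, Example~\ref{ex_peknatrida} shows that if $\beta$ is a confluent Parry number of even degree $d$ with $n<m$, then $\Delta'_{d-1}=\Delta_{d-1}+1>\Delta_{d-1}$. Since by the previous step $\beta$ is already known to be such a confluent Parry number, these two facts are incompatible unless $n=m$, which finishes the proof.

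The main obstacle I expect is the position-tracking argument in the boundary case $i=k-2$, where one cannot simply point to a large letter causing a mismatch with the prefix $w$, but must instead count the spacing of the letter $1$ across the periodic repetition of a short word. The remainder of the argument is essentially bookkeeping once the correct tools are lined up.
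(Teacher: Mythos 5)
Your proposal is correct and follows essentially the same route as the paper: reduce via Propositions~\ref{prop_konjugace_implikuje_parry} and~\ref{prop_konjugace_implikuje_simpleparry} to a simple Parry number with $\DBUS=mm\cdots$, apply Proposition~\ref{p:lothaire} with the conjugating word $w=(0^m1)^m$ to force the middle digits $d_3=\cdots=d_{k-1}=m$, and exclude even degree with $n<m$ by combining Lemma~\ref{lem_stejne_mezery} with Example~\ref{ex_peknatrida}. If anything, your wrap-around argument for the boundary letter $i=k-2$ (comparing the spacing of the $1$'s in $\big((0^m1)^{d_{k-1}}0^{d_k}\big)^\omega$ with that in $w$) treats more carefully a point the paper glosses over by asserting outright that $(0^m1)^m$ is a prefix of $\varphi^2(i)$ for all $i\leq k-2$.
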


\begin{proof}
Thanks to
Proposition~\ref{prop_konjugace_implikuje_simpleparry}, relation~\eqref{eq:mm}, and results
in~\cite{MV13}, we can consider only simple Parry numbers $\beta>1$
with \[\DBUS=[mmd_3\cdots d_{k-1}(d_k-1)]^\omega\,,\ k\geq 3\,,\]
where $d_i\in\{0,\ldots,m\}$ and $d_k\neq 0$. Clearly, if $k=3$,
$\DBUS$ is implies that $\beta$ is in the desired class of numbers.

Let $k\geq 4$. Then there are at least four distinct gaps
$\Delta_{0,1,2,3}$ and from~\eqref{eq:substitucesimpleParry}
and~\eqref{thm_antimorfismy} we get
\[
\begin{array}{c|ll|ll} i &
\varphi(i) & \varphi^2(i) & \psi(i) & \psi^2(i)\\ \hline
0           & 0^m1                  & (0^m1)^m0^m2                          & 0^m1      & 0^m2(0^m1)^m\\
1           & 0^m2                  & (0^m1)^m0^{d_3}3                      & 0^m2      & \psi(2)(0^m1)^m\\
\vdots  & \vdots                    & \vdots                                        & \vdots        & \vdots\\
k-3     & 0^{d_{k-2}}(k-2)  & (0^m1)^{d_{k-2}}0^{d_{k-1}}(k-1)  & \psi(k-3) & \psi^2(k-3)\\
k-2     & 0^{d_{k-1}}(k-1)  & (0^m1)^{d_{k-1}}0^{d_k}               & \psi(k-2) & \psi^2(k-2)\\
k-1     & 0^{d_{k}}             & (0^m1)^{d_{k}}                            & \psi(k-1) & \psi^2(k-1)
\end{array}
\]
Using again Proposition~\ref{p:lothaire}, similarly
as before, we obtain that $\varphi^2(i)(0^m1)^m=(0^m1)^m\psi^2(i)$, which implies that
$\varphi^2(i)$ has $(0^m1)^m$ as its prefix for all
$i\in\{0,\ldots,k-2\}$. It directly follows that $\DBUS=[mm\cdots
m(d_k-1)]^\omega$ and Example~\ref{ex_peknatrida} excludes the
case with $k$ even and $d_k<m$.
\end{proof}

\section{Proof $(3)\Rightarrow(1)$ of Theorem~\ref{thm_hlavni_superteorem}}\label{sec_tomzuzana}

In this section we will prove that property $X(-\beta)=\ZMB$ for $\beta>1$ can be satisfied only if $\beta$
is a zero of the polynomial $x^d-mx^{d-1}-\cdots - mx-n$, where $d\geq 1$, $m\geq n\geq 1$, and $d$ is odd or $m=n$.

\begin{lem}\label{lem:transformace}
  Let $d_{-\beta}(\ell)=l_1l_2\dots l_j\dots.$ Then
  $$
  (-\beta)^{j+1}+l_1(-\beta)^j+(l_2-l_1)(-\beta)^{j-1}+
  \dots+(l_j-l_{j-1})(-\beta)-l_j\in[-\beta,1)\,.
  $$
\end{lem}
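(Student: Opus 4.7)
The plan is to recognize the given expression as nothing more than $(\beta+1)T_{-\beta}^{j}(\ell)$ in disguise, and then invoke the fact that $T_{-\beta}^{j}(\ell)\in[\ell,\ell+1)$.

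First, from the definition of $d_{-\beta}(\ell)$ I would record the recursive relation $T_{-\beta}^{i}(\ell)=-\beta\,T_{-\beta}^{i-1}(\ell)-l_i$, which by iteration (with $T_{-\beta}^{0}(\ell)=\ell$) gives the closed form
$$T_{-\beta}^{j}(\ell)=(-\beta)^{j}\ell-\sum_{i=1}^{j}l_i(-\beta)^{j-i}.$$
Next, denoting the quantity to be analyzed by $E$ and setting $l_0:=0$, I would rewrite the middle block of $E$ as $\sum_{i=1}^{j}(l_i-l_{i-1})(-\beta)^{j+1-i}$, and telescope by splitting into $\sum l_i(-\beta)^{j+1-i}-\sum l_{i-1}(-\beta)^{j+1-i}$, reindexing the second sum, and pulling out the common factor $(-\beta)-1=-(\beta+1)$. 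This yields
$$E=(-\beta)^{j+1}-(\beta+1)\sum_{i=1}^{j}l_i(-\beta)^{j-i},$$
after absorbing the isolated terms $l_j(-\beta)$ and $-l_j$ into $-(\beta+1)l_j$.

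Then I would use the identity $\ell=\tfrac{-\beta}{\beta+1}$, equivalently $(-\beta)^{j+1}=(\beta+1)(-\beta)^{j}\ell$, to factor out $(\beta+1)$ and recognize
$$E=(\beta+1)\!\left[(-\beta)^{j}\ell-\sum_{i=1}^{j}l_i(-\beta)^{j-i}\right]=(\beta+1)\,T_{-\beta}^{j}(\ell).$$
Finally, since $T_{-\beta}^{j}(\ell)\in[\ell,\ell+1)$ by the very definition of the transformation, multiplication by the positive constant $\beta+1$ gives $E\in[(\beta+1)\ell,\,(\beta+1)(\ell+1))=[-\beta,1)$, as claimed.

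There is no real obstacle here; the only delicate point is the bookkeeping in the telescoping step, where introducing the convention $l_0=0$ and carefully reindexing is essential so that the $(-\beta)-1$ factor emerges cleanly and leaves no stray terms outside the $(\beta+1)$ factor.
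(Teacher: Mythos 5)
Your proof is correct and follows essentially the same route as the paper: both identify the displayed quantity as $(\beta+1)T_{-\beta}^{j}(\ell)$, using the iterated formula $T_{-\beta}^{j}(\ell)=(-\beta)^{j}\ell-\sum_{i=1}^{j}l_i(-\beta)^{j-i}$ and $(\beta+1)\ell=-\beta$, and then conclude from $T_{-\beta}^{j}(\ell)\in[\ell,\ell+1)$ by multiplying by $\beta+1$. The paper merely runs the computation in the opposite direction (multiplying the iterate by $\beta+1$ rather than factoring $\beta+1$ out of the expression), which is the same argument.
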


\begin{proof}
    By the definition of the transformation and the expansion $d_{-\beta}(\ell)$ we have
    $$
    T^j(\ell)=  (-\beta)^j\ell-(-\beta)^{j-1}l_1-\cdots-(-\beta)l_{j-1}-l_j \in [\ell,\ell+1)\,.
    $$
We obtain the statement by multiplying both sides by $(\beta+1)$.
\end{proof}

The above lemma gives us useful estimations on $\beta$ when
a prefix of $d_{-\beta}(\ell)$ is known. In particular, we will make use
of the following implications, valid for $k\geq 1$, $a,b,c\in\{0,1,\dots,m\}$, $a<m$, $b,c>0$ and $t\geq 1$.

\begin{alignat}{3}\label{c1}
\dl&=(m\,0)^k\cdots&\ \Rightarrow&-\beta^{2k+1}+m\beta^{2k}+\cdots+ m\beta+m < m+1\\ \label{c2}
\dl&=(m\,0)^ka\cdots&\ \Rightarrow&-\beta^{2k+2}+m\beta^{2k+1}+\cdots+m\beta^2+a\beta+a>-1\\ \label{c3}
\dl&=(m\,0)^kmb\cdots&\ \Rightarrow& -\beta^{2k+2}+m\beta^{2k+1}+\cdots+m\beta+m-b\geq\frac b\beta-1\\ \label{c4}
\dl&=(m\,0)^k0^{2t-1}c\cdots&\ \Rightarrow& \ \beta^{2k}-m\beta^{2k-1}-\cdots-m\beta-m>-\frac {c}{\beta^{2t}}-\frac {c+1}{\beta^{2t+1}}
\end{alignat}

Assume that $\beta$ satisfies $X(-\beta)=\ZMB$. By
Proposition~\ref{thm_erdos_set}, the gaps in $\Zmb$ are
$\leq 1$ and it follows from $\Delta'_1=\beta-\min(1)<1$ that $\min(1)=m$, hence the string $m0^\omega$ is \hbox{$(-\beta)$-admissible}.
Since the string $(m0)^\omega$ is never admissible (otherwise $\DMBL=(m0)^\omega$ which is impossible), it makes sense to speak about
the greatest index $k\geq 1$ such that $(m0)^k0^\omega$ is admissible.
The following statement provides a necessary condition on the expansion $d_{-\beta}(\ell)$ when $X(\beta)=\ZMB$.

\begin{prop}\label{prop:dva_rozvoje}
Assume that $\beta$ satisfies $X(-\beta)=\ZMB$. If $k\geq 1$ is maximal such that $(m0)^{k}0^\omega$ is admissible, then
    $$
    d_{-\beta}(\ell)=(m0)^{k}ab\cdots,\ ab\neq m0\,.
    $$
\end{prop}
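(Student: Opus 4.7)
My plan proves the proposition in two parts. For the first part ($d_{-\beta}(\ell)$ begins with $(m0)^k$), I use that $(m0)^k 0^\omega$ is admissible, which forces $d_{-\beta}(\ell)\apreceq(m0)^k 0^\omega$. A disagreement in the first $2k$ positions would have to occur either at an odd position (where the reference digit is $m$, requiring $l_i>m$) or at an even position (where the reference digit is $0$, requiring $l_i<0$); both are impossible, so $l_1\cdots l_{2k}=(m0)^k$.

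For the second part ($ab\ne m0$) I argue by contradiction. Assume $ab=m0$, so $d_{-\beta}(\ell)$ starts with $(m0)^{k+1}$. By the maximality of $k$, $(m0)^{k+1} 0^\omega$ is inadmissible; since $(m0)^{k+1} 0^\omega \aprec \DMBRS$ holds automatically, the failure must be $d_{-\beta}(\ell)\asucc(m0)^{k+1} 0^\omega$. A short digit-by-digit analysis then forces the first strictly positive letter after the $(m0)^{k+1}$ prefix to sit at an even position, so $d_{-\beta}(\ell)=(m0)^{k+1} 0^s c\cdots$ with $s\ge 1$ odd and $c\ge 1$. Applying Lemma~\ref{lem:transformace} at $j=2k+2$ to the prefix $(m0)^{k+1}$ gives
\[
-\beta^{2k+3}+m(\beta+\beta^2+\cdots+\beta^{2k+2})\in[-\beta,1),
\]
and rearranging the upper bound and dividing by $\beta$ yields
\[
\beta^{2k+2}-m(\beta+\beta^2+\cdots+\beta^{2k+1})>m-\tfrac{1}{\beta}.
\]

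In parallel, I identify $\min(2k+2)=(m0)^k m 1$ and $\max(2k+2)=(0m)^k 0(m-1)$: the lex-next candidates $(m0)^{k+1}$ and $(0m)^{k+1}$ are inadmissible in this scenario (the latter via the shift $(m0)^k m\, 0^\omega$), while $(m0)^k m 1$ and $(0m)^k 0(m-1)$ pass all the shift-comparison tests against $d_{-\beta}(\ell)$. Substituting the corresponding $\gamma$-values into~\eqref{eq_vzdalenosti_zmb_predpis} produces
\[
\Delta'_{2k+2}=\bigl|\beta^{2k+2}-m(\beta+\beta^2+\cdots+\beta^{2k+1})+2-m\bigr|>2-\tfrac{1}{\beta}>1,
\]
contradicting Proposition~\ref{thm_erdos_set} combined with the hypothesis $X(-\beta)=\ZMB$, which forces every gap in $\ZMB$ to be $\le 1$. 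The main technical obstacle is the correct identification of $\min(2k+2)$ and $\max(2k+2)$ together with the shift-by-shift admissibility checks; once those are pinned down the proof collapses to a single arithmetic comparison.
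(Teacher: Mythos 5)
Your argument is correct and follows the same skeleton as the paper's proof: you establish the prefix $(m0)^k$ from admissibility of $(m0)^k0^\omega$, reduce the remaining bad case to $d_{-\beta}(\ell)=(m0)^{k+1}0^{2t-1}c\cdots$ (your ``first nonzero digit at an even position''), use the very same extremal strings (your $(m0)^km1$ and $(0m)^k0(m-1)$ coincide letter by letter with the paper's $m(0m)^k1$ and $0(m0)^k(m-1)$), and contradict Proposition~\ref{thm_erdos_set} via $\Delta'_{2k+2}>1$. Where you genuinely diverge is the final estimate: the paper bounds $\Delta'_{2k+2}=\beta^{2k+2}-m\beta^{2k+1}-\cdots-m\beta-m+2$ from below by \eqref{c4}, which exploits the deeper digits $0^{2t-1}c$ and then forces a case split at the Tribonacci constant and at $\tau$; you instead apply Lemma~\ref{lem:transformace} with $j=2k+2$ (equivalently \eqref{c1} with $k$ replaced by $k+1$), which uses only the prefix $(m0)^{k+1}$ and gives $\beta^{2k+2}-m(\beta+\cdots+\beta^{2k+1})-m>-1/\beta$, hence $\Delta'_{2k+2}>2-\tfrac1\beta>1$ with no case analysis at all. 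I checked the algebra and it is valid, so your ending is cleaner; it also never uses maximality of $k$ in the estimate itself, so it would serve Remark~\ref{pozn} just as well. Two minor points, both at the same level of implicitness as the paper itself: when you claim the only possible failure of admissibility of $(m0)^{k+1}0^\omega$ is $d_{-\beta}(\ell)\asucc(m0)^{k+1}0^\omega$, you should note that all shifts by at least two are shifts of the admissible word $(m0)^k0^\omega$, and the shift $0(m0)^k0^\omega$ needs the one-line comparison with $\DMBRS$ (first difference at position $2k+2$); and, like the paper, you assert rather than fully verify the admissibility of the two extremal strings and rely on the structure results of~\cite{ADMP12} ensuring that $\Delta'_{2k+2}$ is actually attained as a gap of $\ZMB$. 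Neither is a genuine gap relative to the paper's own level of detail.
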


\begin{proof}
From the Ito-Sadahiro admissibility
condition~\eqref{eq_admissibilita_is}, it is clear that the string
$(m\,0)^k$ is a prefix of $d_{-\beta}(\ell)$. When also
$(m\,0)^{k+1}$ is a prefix of $d_{-\beta}(\ell)$ then one can derive from~\eqref{eq_admissibilita_is} that the next
nonzero digit must be on an even position. Thus we necessarily have
    $$
    d_{-\beta}(\ell)=(m0)^{k}ab\cdots,\ ab\neq m0\quad\text{ or }\quad d_{-\beta}(\ell)=(m0)^{k+1}0^{2t-1}c\cdots,\ c,t\geq1.
    $$
Let us exclude the latter case of $d_{-\beta}(\ell)$. Assume that $d_{-\beta}(\ell)=(m0)^{k+1}0^{2t-1}c\cdots$, for some $k,c,t\geq1$. We will prove that $X(-\beta)\neq \Zmb$ by showing that one of the gaps in $\ZMB$ is $>1$, which, by Proposition~\ref{thm_erdos_set}, is impossible in $X(-\beta)$.

Consider extremal strings
    $$
    \begin{array}{cccccc}
      \max(2k+2)&=&0&(m&0)^{k}&(m-1)\\
      \min(2k+2)&=&m&(0&m)^{k}&1
    \end{array}
    $$
    Therefore the value of $\Delta'_{2k+2}$ is
    $$
    \Delta'_{2k+2}=\Big|(-\beta)^{2k+2}+\gamma\big(\min(2k+2)\big)-
    \gamma\big(\max(2k+2)\big)\Big|=\beta^{2k+2}-m\beta^{2k+1}-\cdots -m\beta-m+2
    $$
    and can be estimated using \eqref{c4} by
    $$
    \Delta'_{2k+2}\geq2-\frac c{\beta^{2t}}-\frac{c+1}{\beta^{2t+1}}\geq
    2-\frac c{\beta^2}-\frac {c+1}{\beta^3}\,.
    $$
    If $\beta$ is greater than the Tribonacci constant $\beta_0=1.83929\cdots$, zero of $x^3-x^2-x-1$, then we use $c<\beta$ to conclude that
    $$
    \Delta'_{2k+2}\geq 2-\frac c{\beta^2}-\frac {c+1}{\beta^3} \geq 2-\frac \beta{\beta^2}-\frac{\beta+1}{\beta^3}= 2-\frac1{\beta}-\frac1{\beta^2}-\frac1{\beta^3}>1.
    $$
    If, on the other hand, $\tau=\frac12(1+\sqrt5)\leq\beta\leq \beta_0<2$, then $c=1$, and we have
    $$
    \Delta'_{2k+2}\geq 2-\frac c{\beta^2}-\frac{c+1}{\beta^3}\geq 2-\frac1{\tau^2}-\frac2{\tau^3}= 1+\frac1{\tau^4}>1.
    $$
This concludes the proof, since for $\beta<\tau$, we have $\ZMB=\{0\}\neq X(-\beta)$.
\end{proof}

\begin{pozn}\label{pozn}
In the proof of the above proposition, we have shown that if $\DMBL=(m0)^{j}0^{2t-1}c\cdots$, then $\Delta'_{2j}>1$, and therefore by Proposition~\ref{thm_erdos_set},  $\ZMB\neq X(-\beta)$. This is shown for $j=k+1\geq 2$, but in fact, the argument works for $j=1$, as well.
\end{pozn}

\begin{lem}\label{lem:kratsi}
 Let $d_{-\beta}(\ell)$ have prefix $(m0)^kab$, $k\geq 1$, $ab\neq m0$, and let $\ZMB\neq \{0\}$. Let $z$ be a real number with a $(-\beta)$-representation $z=1(m\,0)^km\bullet 0^\omega$. Then the most significant digit in the $(-\beta)$-expansion $\langle z\rangle_{-\beta}$ of $z$ is at the
position of $(-\beta)^n$ where $n\leq 2k-1$.
\end{lem}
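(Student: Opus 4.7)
The idea is to prove $z/(-\beta)^{2k} \in (\ell,\ell+1) = (-\beta/(\beta+1),1/(\beta+1))$; by Definition~\ref{de_minusbetarozvoje}, this forces the minimal scaling index occurring in $\rozvojm{z}$ to be at most $2k$, so that the most significant digit of $\rozvojm{z}$ lies at a position $(-\beta)^n$ with $n \leq 2k-1$, which is the desired conclusion.

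I would start by computing directly from the representation that $z = -\beta^{2k+1} + m\sum_{j=0}^{k}\beta^{2j}$, hence $z/(-\beta)^{2k} = -\beta + m\sum_{i=0}^{k}\beta^{-2i}$. Using the identity $(\beta+1)\sum_{i=0}^{k}\beta^{-2i} = \sum_{j=-2k}^{1}\beta^{j}$, the membership $z/(-\beta)^{2k}\in(\ell,\ell+1)$ is equivalent to the double inequality
$$
\beta^{2} \;<\; m\sum_{j=-2k}^{1}\beta^{j} \;<\; \beta^{2}+\beta+1\,.
$$
Both bounds should follow from the implications (c1)--(c3) listed just before the lemma (each a consequence of Lemma~\ref{lem:transformace} applied to a prefix of $\DMBL$ of length $2k$, $2k+1$, or $2k+2$).

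For the upper bound I would apply (c1), available because $(m0)^k$ is a prefix of $\DMBL$: dividing it by $\beta^{2k}$ and adding $m\beta$ yields $m\sum_{j=-2k}^{1}\beta^j < (m+1)(\beta+\beta^{-2k})$, which is to be compared to $\beta^{2}+\beta+1$. For the lower bound I would split on the two cases compatible with $ab \neq m0$. If $a < m$, applying (c2) after dividing by $\beta^{2k}$ gives $m\sum_{j=-2k}^{1}\beta^j > \beta^{2}+(m-a)/\beta^{2k-1}+(m-a-1)/\beta^{2k}$, strictly exceeding $\beta^{2}$. If $a=m$, necessarily $b \geq 1$, and (c3) after dividing by $\beta^{2k+1}$ gives $m\sum_{j=-2k}^{1}\beta^j \geq \beta^{2}+(b-1)/\beta^{2k}+b/\beta^{2k+1}$, again $> \beta^{2}$.

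The only nontrivial estimate is closing the gap in the upper bound: one needs $(m+1)(\beta+\beta^{-2k}) \leq \beta^{2}+\beta+1$, which after rearrangement becomes $(m+1)/\beta^{2k} \leq \beta(\beta-m)+1$. Since the hypothesis $\DMBL = (m0)^kab\cdots$ with $k \geq 1$ excludes integer bases (for which $\DMBL = m^{\omega}$), one has $\beta > m$, so $\beta(\beta-m)+1 > 1$; for $m=1$, the bound $\beta \geq \tau$ (guaranteed by $\ZMB \neq \{0\}$) yields $\beta(\beta-1) \geq 1$, whereas for $m \geq 2$ one has $(m+1)/\beta^{2k} \leq (m+1)/m^{2} < 1$ and the estimate is immediate.
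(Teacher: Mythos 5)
Your proof is correct and essentially the paper's: both arguments reduce the claim to placing $z$ inside the scaled fundamental interval $(-\beta)^{2k}(\ell,\ell+1)$ and establish this via the estimates \eqref{c1}--\eqref{c3} from Lemma~\ref{lem:transformace}, with the same case split $a<m$ versus $a=m,\ b\geq 1$ and the same use of $\beta\geq\tau$ (the paper proves the slightly stronger bounds $0>z>-\beta^{2k+1}/(\beta+1)$ and quotes Lemma~3 of \cite{MaPe}, while you appeal directly to Definition~\ref{de_minusbetarozvoje}, which suffices). The only blemish is a harmless slip in the case $a=m$: to get the displayed inequality $m\sum_{j=-2k}^{1}\beta^j\geq \beta^2+(b-1)/\beta^{2k}+b/\beta^{2k+1}$ you should divide \eqref{c3} by $\beta^{2k}$, not $\beta^{2k+1}$.
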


\begin{proof}
By Lemma~3 from~\cite{MaPe}, the statement is true, if we show that
$$
0>z=-\beta^{2k+1}+m\beta^{2k}+m\beta^{2k-2}+\cdots+m\beta^2+m>\frac{-\beta^{2k+1}}{\beta+1}
$$
We prove $\frac{\beta+1}\beta z<0$ for the general case $\dl=(m\,0)^k\cdots.$
  We have
  $$
  \frac{\beta+1}\beta z=-\beta^{2k}\underbrace{-\beta^{2k+1}+m\beta^{2k}+\dots +m\beta +m}_{<1+m, \text{ see }\eqref{c1} } +\frac m\beta<-\beta^2+1+m+\frac m\beta=H\,.
  $$
For $\beta>2$, we use $m<\beta$ to further estimate $H< -\beta^2+1+\beta+1=2+\beta-\beta^2<0$.
 For $\beta\in(1,2)$ we have that $m=\lfloor\beta\rfloor=1$, which gives
  $$
H= -\beta^2+2+\frac1\beta=-(\beta^2-\beta-1)\frac{\beta+1}{\beta}.
  $$
  The last expression is non-positive for $\beta\geq\tau$, i.e.\ such that $\Z_{-\beta}$ is nontrivial.

  \bigskip
  We will verify the second inequality, namely $(\beta+1)z>-\beta^{2k+1}$, separately in two cases, dependently on the form of $d_{-\beta}(\ell)$.
  \begin{enumerate}
        \item
        For  $d_{-\beta}(\ell)=(m\,0)^ka\cdots,\ a<m$ we have
          $$
          \begin{aligned}
            (\beta+1)z&=-\beta^{2k+1}\underbrace{-\beta^{2k+2}+m\beta^{2k+1}+\dots+m\beta^2+a\beta+a}_{>-1,\text{ see }\eqref{c2}}
            +m\beta+m-a\beta-a>\\
            &>-\beta^{2k+1}+(m-a)\beta+m-(a+1)>-\beta^{2k+1}\,.
          \end{aligned}
          $$
        \item
        In case that $\dl=(m\,0)^kmb\cdots$, with $b>0$ we have
        $$
        \begin{aligned}
        (\beta+1)z &=-\beta^{2k+1}\underbrace{-\beta^{2k+2}+m\beta^{2k+1}+\dots+m\beta+m-b}_{>\frac{b}{\beta}-1,\text{ see }\eqref{c3}}+b>\\
        &>-\beta^{2k+1}+\frac b\beta-1+b>-\beta^{2k+1}\,.
        \end{aligned}
        $$

  \end{enumerate}
\end{proof}

Now we are in a state to prove the remaining implication $(3)\Rightarrow(1)$ of Theorem~\ref{thm_hlavni_superteorem}.

\begin{prop}
Let $\beta>1$. If $X(-\beta)=\ZMB$ then $\beta$ is a zero of
$$
x^d-mx^{d-1}-\cdots-mx-n,
$$
where $n=m$ for $d$ even and $n\leq m$ otherwise.
\end{prop}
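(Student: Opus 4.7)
The plan is to derive an explicit polynomial equation for $\beta$ by applying Lemma~\ref{lem:kratsi} to a carefully chosen element of $X(-\beta)$, and then identify this polynomial with the confluent one.

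By Proposition~\ref{prop:dva_rozvoje}, the assumption $X(-\beta)=\ZMB$ forces $d_{-\beta}(\ell)=(m0)^k ab\cdots$ with $ab\neq m0$, where $k\geq 1$ is the maximal integer such that $(m0)^k 0^\omega$ is $(-\beta)$-admissible. Now consider
\[
z \;=\; 1(m0)^km\bullet 0^\omega \;=\; (-\beta)^{2k+1}+m\sum_{j=0}^{k}(-\beta)^{2j}.
\]
All digits of this $(-\beta)$-representation lie in $\{0,1,\dots,m\}$, so $z\in X(-\beta)=\ZMB$. By Lemma~\ref{lem:kratsi} its $(-\beta)$-expansion has top digit at position at most $2k-1$, yielding digits $y_0,\dots,y_{2k-1}\in\{0,\dots,m\}$ with $z=\sum_{i=0}^{2k-1}y_i(-\beta)^i$. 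Equating the two representations of $z$ and isolating $\beta^{2k+1}$ gives
\[
\beta^{2k+1} \;=\; m\beta^{2k}+y_{2k-1}\beta^{2k-1}+(m-y_{2k-2})\beta^{2k-2}+y_{2k-3}\beta^{2k-3}+\cdots+y_1\beta+(m-y_0).
\]

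The crucial step is to pin down $y_{2j+1}=m$ for $0\leq j\leq k-1$, $y_{2j}=0$ for $1\leq j\leq k-1$, and $y_0\in\{0,\dots,m\}$. I would proceed by comparing the admissible string $y_{2k-1}\cdots y_0 0^\omega$ with $\DMBL=(m0)^k ab\cdots$ in alternate order, using the maximality of $k$ (which rules out the prefix $(m0)^{k+1}$ from being admissible) and the estimates~\eqref{c1}--\eqref{c4}, which tie each deviating digit back to an explicit inequality on $\beta$. The flavor is analogous to the position-by-position induction carried out in Section~\ref{sec_dan} for the implication $(2)\Rightarrow(1)$. Once the $y_i$'s are identified, $\beta$ is a root of $x^{2k+1}-mx^{2k}-\cdots-mx-(m-y_0)$.

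Finally, setting $n=m-y_0\in\{0,\dots,m\}$, two cases arise. If $n\geq 1$, this is the confluent polynomial of odd degree $d=2k+1$ with $m\geq n\geq 1$, as desired. If $n=0$, the polynomial factors as $x\bigl(x^{2k}-mx^{2k-1}-\cdots-m\bigr)$, and since $\beta>1$, $\beta$ satisfies the even-degree confluent polynomial with $m=n$. The remaining possibility that $\beta$ be a root of an even-degree confluent with $m>n$ is excluded directly by Example~\ref{ex_peknatrida} combined with Proposition~\ref{thm_erdos_set}: such $\beta$ would yield a gap $\Delta'_{d-1}=1+n/\beta>1$ in $\ZMB$, contradicting $X(-\beta)=\ZMB$. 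The main obstacle is the identification of the digits $y_i$; this hinges on a delicate combinatorial argument comparing the short greedy expansion of $z$ against $\DMBL$ position by position, using the admissibility relations~\eqref{eq_admissibilita_is} together with the algebraic estimates~\eqref{c1}--\eqref{c4}.
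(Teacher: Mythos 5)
Your setup is exactly the paper's: the same number $z=1(m0)^km\bullet 0^\omega$, the same use of Proposition~\ref{prop:dva_rozvoje} and Lemma~\ref{lem:kratsi}, and the same identity relating the two representations of $z$. The genuine gap is that the step you yourself call crucial --- identifying the digits $y_{2k-1},\dots,y_0$ --- is only announced, and the conclusion you announce for it is too strong. What can actually be extracted is this: an algebraic estimate (not an alternate-order, digit-by-digit comparison with $\DMBL$) shows that $y_{2k-1}\cdots y_0$ must agree with $(m0)^k$ at every position $j\geq 2$; indeed, if $j\ge 2$ were the largest position of disagreement, \eqref{c1} gives $0<m+1-\beta^{j}-(m-y_1)\beta-y_0$, which is impossible since the right-hand side is at most $\beta+1-\beta^{2}\le 0$ for $\beta\ge\tau$. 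A second application of \eqref{c1} then restricts the last two digits to $(y_1,y_0)\in\{(m,t):0\le t\le m\}\cup\{(m-1,0)\}$. The pair $(m-1,0)$ survives all admissibility and inequality constraints, and it is exactly the case your sketch silently discards by asserting $y_{1}=m$: it corresponds to $\beta$ being a zero of $x^{2k+1}-mx^{2k}-\cdots-(m-1)x-m$, which is not of the required confluent form. The paper needs a separate argument to eliminate it: for that $\beta$ one computes $\dl=[(m\,0)^k(0\,m)^k]^\omega$, and Remark~\ref{pozn} (the gap estimate $\Delta'_{2j}>1$ from the proof of Proposition~\ref{prop:dva_rozvoje}) then contradicts $X(-\beta)=\ZMB$. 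Without carrying out the digit identification and without this exclusion, the proof is incomplete at its central point.

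Two smaller remarks. The analogy you invoke with the ``position-by-position induction'' of Section~\ref{sec_dan} is misleading: that section compares morphisms via conjugacy words, whereas here the high digits are forced by a purely arithmetic inequality argument, so the proposed combinatorial comparison with $\DMBL$ is not the mechanism that does the work. Also, your final paragraph excluding even-degree confluent polynomials with $m>n$ via Example~\ref{ex_peknatrida} is superfluous: once the digits are identified, the dichotomy $n=m-y_0\ge 1$ (odd degree $2k+1$) versus $y_0=m$ (factor out $x$, even degree $2k$ with $n=m$) already has the asserted form, so no such case needs to be excluded.
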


\begin{proof}
According to Proposition~\ref{prop:dva_rozvoje}, if $X(-\beta)=\ZMB$, then $d_{-\beta}(\ell)=(m\,0)^kab0^\omega,\ ab\neq m0$, where
$k\geq 1$ is such that $(m\,0)^{k+1}0^\omega$ is forbidden while $(m\,0)^k0^\omega$
is admissible.
Consider the number $z$ represented by the forbidden string $1(m\,0)^km\bullet0^\omega$.
According to Lemma~\ref{lem:kratsi}, the most
significant digit of the $(-\beta)$-expansion $\langle z\rangle_{-\beta}$ of $z$ is at the
position  $n\leq{2k-1}$. Since $X(-\beta)=\ZMB$, $z$ is a $(-\beta)$-integer, denote its $(-\beta)$-expansion by
$\langle z\rangle_{-\beta}=\hbox{$z_{2k-1}z_{2k-2}\cdots z_0\bullet 0^\omega$}$.
We have
$$
\begin{aligned}
0&=\gamma\big(1(m\,0)^km\big)-\gamma\big(z_{2k-1}z_{2k-2}\cdots z_0\big)=\\
&=-\beta^{2k+1}+m\beta^{2k}+z_{2k-1}\beta^{2k-1}+(m-z_{2k-2})\beta^{2k-2}+\cdots+z_1\beta+m-z_0\,.
\end{aligned}
$$
Denote by $j$ the maximal index at which the string $z_{2k-1}z_{2k-2}\cdots z_0$ differs from the string $(m0)^k$, i.e.
$z_j\leq m-1$ if $j$ is odd and $z_j\geq 1$ if $j$ is even. We estimate
\begin{equation}\label{eq:blabla}
\begin{aligned}
0&=\gamma\big(1(m\,0)^km\big)-\gamma\big(z_{2k-1}z_{2k-2}\cdots z_0\big)\leq\\
&\leq\underbrace{-\beta^{2k+1}+m\beta^{2k}+\cdots+(m-1)\beta^j+\cdots+m\beta+m}_{<m+1-\beta^j}-(m-z_1)\beta-z_0<\\
&\leq m+1 - \beta^j - (m-z_1)\beta-z_0\,,
\end{aligned}
\end{equation}
where we have used~\eqref{c1}. If $j\geq 2$, we further estimate
\begin{equation}\label{eq:bla}
m+1 - \beta^j - (m-z_1)\beta-z_0 < \beta + 1 - \beta^2 \leq 0\,,
\end{equation}
where the last inequality holds since $\beta$ is necessarily $\geq \tau=\frac12(1+\sqrt5)$. However,
inequalities~\eqref{eq:blabla} and~\eqref{eq:bla} are in contradiction. Therefore $j\in\{0,1\}$, i.e.
the expansion of $z$ can differ from the string $(m\,0)^k$ only at the positions of $(-\beta)^1$ and $(-\beta)^0$.
We therefore have
$$
\begin{aligned}
0&=\gamma\big(1(m\,0)^km\big)-\gamma\big(z_{2k-1}z_{2k-2}\cdots z_0\big)=\gamma\big(1(m\,0)^km\big)-\gamma\big((m0)^{k-1}z_1z_0\big)\\
&=\underbrace{-\beta^{2k+1}+m\beta^{2k}+\dots+m\beta^2+m\beta+m}_{<m+1} - (m-z_1)\beta-z_0<\\
&<m+1- (m-z_1)\beta-z_0\,.
\end{aligned}
$$
It is easy to verify that $0<m+1- (m-z_1)\beta-z_0$ cannot be satisfied if $z_1\leq m-2$. Further, we derive that $z_1=m-1$ implies $z_0=0$.
So the possible pairs $(z_1,z_0)$ are $(m,n)$, $0\leq n\leq m$, and $(m-1,0)$.
Equation $0=\gamma\big(1(m\,0)^km\big)-\gamma\big((m0)^{k-1}z_1z_0\big)$ for these pairs implies that $\beta$ is a zero of the polynomial
$$
\begin{array}{ll}
x^{2k+1}-mx^{2k}-\cdots -mx -n &\text{when }(z_1,z_0)=(m,m-n),\ 0\leq n<m\,,\\
x^{2k}-mx^{2k-1}-\cdots -mx -m &\text{when }(z_1,z_0)=(m,m)\,,\\
x^{2k+1}-mx^{2k}-\cdots -(m-1)x -m&\text{when }(z_1,z_0)=(m-1,0)\,.
\end{array}
$$
In order to conclude the proof, we have to show that the latter case does not occur. In fact, the irreducible polynomial  $x^{2k+1}-mx^{2k}-\cdots -(m-1)x -m$ has a unique zero $\beta>1$, for which
$\dl=[(m\,0)^k(0\,m)^k]^\omega$. However, by Remark~\ref{pozn}, $d_{-\beta}(\ell)=(m0)^{k}0^{2t-1}c\cdots$, $c,t\geq1$, implies $X(-\beta)\neq\ZMB$.
\end{proof}


\section{Comments and examples}\label{sec_5examples}

\begin{description}

\item[Confluent Parry numbers of an even degree.]

Let $\beta$ be a confluent Parry number of an even degree $d\geq 2$ with
minimal polynomial $p(x)=x^d-mx^{d-1}-\cdots-mx-n$, $m>n\geq 1$.
We already know from Example~\ref{ex_peknatrida} that the sets
of distances in $\ZBP$ and $\ZMB$ do not coincide, hence
$\varphi^2\nsim\psi^2$. However, one can use similar approach as in~\cite{MV13}
to show that cutting every distance $\Delta'_{d-1}>1$ in $\ZMB$ into
$\Delta'_{d-1}=1+\frac{n}{\beta}=\Delta_0+\Delta_{d-1}$, we obtain a structure that can be coded 
by an infinite word with the same language as the language of $u_\beta$. Formally, 
we apply on $u_{-\beta}$ a morphism $\pi:\{0,\ldots,d-1\}^*\rightarrow\{0,\ldots,d-1\}^*$,
\[
\pi(i)=\begin{cases}
i & \text{if $i\in\{0,\ldots,d-2\}$,}\\
0(d-1) & \text{if $i=d-1$.}
\end{cases}
\]
Then it can be verified that the words $u_\beta$ and
$\pi(u_{-\beta})$ have the same language. Indeed, one can show that $\pi(u_{-\beta})$ is a fixed point of
a morphism $\widetilde\psi$, which is the unique morphism for which
$\pi\circ\psi^2=\widetilde\psi\circ\pi$, and $\varphi^2\sim\widetilde\psi$.

\item[Quadratic and cubic numbers.]

The comparison of $(\pm\beta)$-integers for quadratic numbers $\beta$ was done in~\cite{MV13}.
It was shown that the sets of distances in $\ZBP$ and in $\ZMB$ coincide if and only if $\varphi^2\sim\psi^2$, which is equivalent to 
$\ZMB=X(-\beta)$. This happens precisely for the class of zeros $\beta>1$ of $x^2-mx-m$, $m\geq 1$.
For other quadratic Parry numbers, $\Zmb$ always contains distances $>1$.


Considering cubic Parry numbers, we see a difference. There may be bases $\beta$, for which the sets of distances in $\ZBP$ and $\ZMB$
coincide, however, $\varphi^2\not\sim\psi^2$. We may study the possible situations starting with the classification of 
cubic Pisot numbers, which can be derived from~\cite{Aki00} and~\cite{Bas02}.
For illustration, we restrict ourselves to cubic Pisot units, i.e. $\beta>1$ with minimal polynomial
$p(x)=x^3-ax^2-bx-c$, $c=\pm 1$.

\begin{itemize}
\item
If $p(x)=x^3-mx^2-mx-1=0$, $m\geq 1$, then $\beta$ is a confluent Parry number for which $\ZMB=X(-\beta)$, and from 
$$
d_\beta(1)=mm10^\omega\,,\quad d_{-\beta}(\ell)=m01^\omega\,,
$$
we may derive for the sets of distances that $\{\Delta_1,\Delta_2,\Delta_3\}=\{\Delta'_1,\Delta'_2,\Delta'_3\}$
and that $\varphi^2\sim\psi^2$.

\item
If $p(x)=x^3-(m+1)x^2+x-1=0$, $m\geq 1$, or  $p(x)=x^3-(m+1)x^2+1=0$, $m\geq 2$, then 
$\{\Delta_1,\Delta_2,\Delta_3\}=\{\Delta'_1,\Delta'_2,\Delta'_3\}$, however, the morphisms are not conjugated.

\item
For all the other cases of cubic Pisot units, we derive from $d_\beta(1)$ and $d_{-\beta}(\ell)$ that $\ZMB$ contains a distance $>1$.

\end{itemize}
%


\item[More about spectra of Pisot numbers.]

The set $X(\beta)$  is a special case of a more gene\-ral notion of a spectrum of a real number $\beta>1$, defined as the set of $p(\beta)$ where $p$ ranges over all polynomials with coefficients restricted
to a finite set of integers. In particular,
$$
X^{r}(\beta) = \Big\{\sum_{j=0}^N a_j \beta^j : N\in\N,\, a_j\in\{0,1,\dots,r\}\Big\}\,.
$$
For an extensive overview of the problem of spectra, see for example~\cite{AkiKomo}. 
A general result by Feng and
Wen~\cite{FengWen02} states that for a Pisot number $\beta$ and
$r+1>\beta$, the sequence of distances in
$X^{r}(\beta)$ can be generated by a substitution. However,
neither an explicit prescription for the substitution, nor the values of
distances and their frequencies are known in general.
Bugeaud in 2002~\cite{Bug02} gives these in case that $\beta$ is a
multinacci number, i.e. zero of~\eqref{eq:confluentpos} for
$m=n=1$, and $r=\lfloor\beta\rfloor=1$. Garth and
Hare in 2006~\cite{GarthHare} provide the substitution for any zero
of~\eqref{eq:confluentpos} and $r=\lfloor\beta\rfloor$. Notably, in both
cases, the substitution can be simply obtained by observing that
$X^{\lfloor\beta\rfloor}(\beta)=\Z_\beta^+$ and using the
canonical substitution for $\beta$-integers given in 1995 by
Fabre~\cite{Fab95}.

\item[Arithmetics.]

Among the desired properties of a number system is that the set of numbers with finite expansions is closed under arithmetical operations, in particular, under addition and subtraction. For bases $\beta>1$, this the so-called finiteness property rewrites as ${\rm Fin}(\beta)=\Z[\beta,\beta^{-1}]$. It was shown in~\cite{FrSo} that the finiteness property holds for all confluent Parry numbers.
An analogous finiteness property for negative base system has not yet been sufficiently explored. So far, the only known class of numbers $\beta>1$ such that ${\rm Fin}(-\beta)=\Z[\beta,\beta^{-1}]$ were the zeros of $x^2-mx+n$, $m-2\geq n\geq 1$, as shown in~\cite{MPV11}.

One could have expected that the bases for which $\Z_{-\beta}=X(-\beta)$ are proper candidates for the negative finiteness property.  However, for almost all confluent Parry numbers, this is not valid. To see this, it is sufficient to consider the following examples
of sum of numbers in $\Zmb$ with infinite expansions:
\begin{itemize}
\item For $\beta$ zero of $x^{2k}-mx^{2k-1}-\cdots-mx-m$, $m\geq 1$, one has
$$
\langle m+1\rangle_{-\beta}=1m0\bullet0^{2k-3}11m\big[0^{2k-3}110\big]^\omega\,;
$$
\item for $\beta$ zero of $x^3-mx^2-mx-n,\ m>n\geq 1$, one has
$$
\langle -\beta+m+1\rangle = 0\bullet0(m-n+1)(m-n+1)1(n+1)^\omega\,;
$$
\item for $\beta$ zero of $x^{2k+1}-mx^{2k}-\cdots-mx-n,\ m>n\geq 1,\ k\geq 2$, one has
$$
\langle -\beta+m+1\rangle_{-\beta}=0\bullet0^{2k-1}(m-n+1)(m-n+1)0\big[0^{2k-3}1(n+1)n\big]^\omega.
$$
\end{itemize}
On the other hand, in case when $\beta$ is of odd degree and $n=m$, we conjecture that the finiteness property is satisfied. The proof for the smallest such $\beta$, namely the Tribonacci constant, zero of $x^3-x^2-x-1$, obtained recently by one of the authors, can be found in~\cite{Va13}.

\item[Measurably isomorphic transformations.]

An interesting comparison of $(\pm\beta)$-nume\-ration is done by Kalle in~\cite{Kal13}, where the similarity of the transformations $T_\beta$ and $T_{-\beta}$ is studied. It is shown that although the transformations cannot be isomorphic (since they have different number of fixed points), one can sometimes find a measurable isomorphism between \hbox{$(\pm\beta)$-transformations}. Among all bases $\beta\in(1,2)$, this happens precisely if $\beta$ is a multinacci number.

Note that for $\beta\in(1,2)$ the notions of multinacci numbers and confluent Parry numbers coincide. Hence for bases $\beta<2$ we get that the existence of a measurable isomorphism between $(\pm\beta)$-transformations is equivalent to all conditions in~Theorem~\ref{thm_hlavni_superteorem}.
One could expect this measurable isomorphism property to be related to properties in~Theorem~\ref{thm_hlavni_superteorem} also for $\beta>2$. Nevertheless, it is conjectured in~\cite{Kal13} that among all $\beta>1$, the measurable isomorphism property will hold exactly for the zeros of polynomials of the form $x^d-mx^{d-1}-\ldots -mx-n$, where $m\geq n\geq 1$ and $d\geq 1$ is arbitrary. As we exclude in our results the subclass of $\beta$'s with minimal polynomial of even degree $d$ with $m>n$, it seems that properties in~Theorem~\ref{thm_hlavni_superteorem} are, in general, not equivalent to $T_\beta$ and $T_{-\beta}$ being measurably isomorphic.

\end{description}

\section*{Acknowledgements}
This work was supported by the Czech Science Foundation, grant No.\ 13-03538S. We also acknowledge financial support of
the Grant Agency of the Czech Technical University in Prague, grant No.\ SGS11/162/OHK4/3T/14.


\end{document}